\newtheorem{theorem}{Theorem}
\theoremstyle{plain}
\newtheorem{corollary}{Corollary}
\newtheorem{fact}{Fact}
\newtheorem{lemma}{Lemma}
\newtheorem{proposition}{Proposition}
\theoremstyle{remark}
\newtheorem{remark}{Remark}
\theoremstyle{plain}
\theoremstyle{definition}
\newtheorem{definition}{Definition}
\theoremstyle{definition}
\newtheorem*{cexmp}{Counter-examples}
\begin{document}
\title[Packing of permutations into Latin squares
]{
Packing of permutations into Latin squares}

\author{Stephan Foldes}
\address{Stephan Foldes\newline%
\indent University of Miskolc,  \newline%
\indent 3515 Miskolc-Egyetemvaros, Hungary}
\email {foldes.istvan@uni-miskolc.hu}%

\author{Andr\'as Kaszanyitzky}
\address{Andr\'as Kaszanyitzky\newline%
\indent E\"otv\"os Lor\'and University \newline%
\indent Faculty of Science / Museum of Mathematics\newline%
\indent  1117 Budapest, P\'azm\'any P\'eter s\'et\'any 1/a, Hungary }
\email{kaszi75@gmail.com}%

\author{L\'aszl\'o Major}
\address{L\'aszl\'o Major\newline%
\indent University of Turku   \newline%
\indent Faculty of Science and Engineering \newline%
\indent  20500 Turku, Vesilinnantie 5, Finland }
\email{laszlo.major@utu.fi}%

\hspace{-4mm} \date{21 December 2019}
 \subjclass[2010]{Primary 05B15; Secondary  	20B35} %
\keywords{Latin square, permutation group, mutually orthogonal Latin squares, centrosymmetric matrix}

\begin{abstract}
For every positive integer $n$ greater than $4$ there is a set of  Latin squares of order $n$ such that every permutation of the numbers $1,\ldots,n$ appears exactly once as a row, a column, a reverse row or a reverse column of one of the given Latin squares.  If $n$ is greater than $4$ and not of the form $p$ or $2p$ for some prime number $p$ congruent to $3$ modulo $4$, then there always exists a Latin square of order $n$ in which the rows, columns, reverse rows and reverse columns are all distinct permutations of $1,\ldots,n$, and which constitute a permutation group of order $4n$. If $n$ is prime congruent to $1$ modulo $4$, then a set of $(n-1)/4$ mutually orthogonal Latin squares of order $n$ can also be constructed by a classical method of linear algebra in such a way, that the rows, columns, reverse rows and reverse columns are all distinct and constitute a permutation group of order $n(n-1)$.   

 \end{abstract}

\maketitle

\section{Introduction}

A \textit{permutation} is a bijective map  $f$  from a finite, $n$-element set (the domain of the permutation) to itself. When the domain is fixed, or it is an arbitrary $n$-element set, the group of all permutations on that domain is denoted by $S_n$ (\textit{full symmetric group}). If the elements of the domain are enumerated in some well defined order as $z_1,\ldots,z_n$, then the sequence  $f(z_1),\ldots,f(z_n)$  is called the \textit{sequence representation} of the permutation $f$.  This sequence representation then fully determines the permutation, and the term \say{permutation} may mean this sequence itself or the bijective map that it represents.

In every $n$-by-$n$ square matrix $L$ there are sequences of length $n$ occurring as rows, columns, reverse rows or reverse columns. These at most $4n$ sequences will be called \textit{lines} of the matrix. 

Without loss of generality we shall throughout this paper assume that matrix entries and the elements of the underlying sets of permutations belong to some non-trivial ring with a zero and a unit element $1$. This will allow in particular the multiplication of any square matrix of order $n$ with the 
 \emph{reversal matrix} (also called the \emph{exchange matrix}) of order $n$ given by $J(i,j)= \delta_{i, n+1-j}$ (where $\delta_{i,j} $ is the Kronecker delta). 
  Thus the reverse columns of $L$ are the rows of the conjugate $JLJ$
and the reverse rows are the rows of the conjugate transpose $JL^TJ$, while the columns are just the rows of the transpose $L^T$.

The four matrix operators mapping $L$ to $L$, $L^T$, $JLJ$, and $JL^TJ$, respectively, form a group under composition. The matrix $L$ is \textit{symmetric} if $L=L^T,$ \textit{centrosymmetric} if $L=JLJ$  and \textit{Hankel symmetric} if  $L = JL^TJ$.  None of these properties implies the others, but any two of them imply the third: such matrices have been studied e.g. in \cite{cent0, cent1, cent2, cent10}.

The opposite of this situation is when the $4n$ lines of the matrix $L$ are all distinct, in this paper we call such a matrix \textit{strongly asymmetric}. In this paper we are interested in matrices that are actually Latin squares. For the general theory of Latin squares see Keedwell and D\'enes \cite{kur}.

Given a finite, $n$-element set $R$, a set $S$ of $m$  Latin squares of order $n$ with entries in $R$ (meaning that each line of each member of $S$ represents a permutation of $R$) is called a \textit{packing} if 
\begin{enumerate}[label=(\roman*)]
\item
 all members of $S$ are strongly asymmetric,
\item no line of any member of $S$ appears as a line in any other member of $S$. 
\end{enumerate}

Equivalently, $S$ is a packing if the number of distinct sequences appearing as lines in the members of $S$ is $4nm$. In that case the $4nm$ permutations represented by these $4nm$ sequences are said to be \textit{packed into}  the set $S$ of Latin squares.

\begin{proposition}\label{els}
If some subgroup $G$ of the full symmetric group $S_n$ of all permutations is packed into a set $S$ of strongly asymmetric Latin squares, then every subgroup $H$ of $S_n$ containing $G$ is also packed into some set of strongly asymmetric
Latin squares.
\end{proposition}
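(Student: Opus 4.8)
The plan is to use the fact that the subgroup $G$, being of finite index in $H$, partitions $H$ into cosets, and to manufacture a packing of each coset by transforming the given packing $S$ of $G$ entrywise. Writing $k=[H:G]$ and fixing left-coset representatives $g_1,\dots,g_k$, we have the disjoint union $H=\bigcup_{i=1}^{k} g_iG$. Since $S$ packs $G$, the set of lines occurring in the members of $S$ is exactly $G$, with $|G|=4nm$ where $m=|S|$; hence $|H|=4nkm$, which is consistent with $H$ being packed into a set of $km$ Latin squares.

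The key observation concerns how applying a permutation to all entries of a matrix acts on its lines. For $g\in S_n$ and an $n\times n$ matrix $L$ over $R$, let $g\cdot L$ denote the matrix with $(g\cdot L)(r,c)=g(L(r,c))$. Inspecting the four kinds of lines shows that each is transformed by composition on the left with $g$: row $r$ of $L$ is the permutation $j\mapsto L(r,j)$, and the corresponding row of $g\cdot L$ is $j\mapsto g(L(r,j))$, namely the left product of $g$ with that permutation, and the same holds verbatim for columns, reverse rows and reverse columns. Since $g$ is a bijection of $R$, the matrix $g\cdot L$ is again a Latin square; and since left multiplication by $g$ is injective on $S_n$, the $4n$ lines of $g\cdot L$ are pairwise distinct precisely when those of $L$ are. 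Thus $g\cdot L$ is strongly asymmetric whenever $L$ is, and its set of lines equals the left translate $g\cdot(\text{lines of }L)$.

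Granting this, put $S_i=\{\,g_i\cdot L : L\in S\,\}$ for $1\le i\le k$. Each $S_i$ consists of strongly asymmetric Latin squares, and the lines occurring in the members of $S_i$ form exactly the coset $g_iG$, a set of $|g_iG|=4nm$ distinct permutations; so $S_i$ is a packing of the coset $g_iG$. Now set $S'=\bigcup_{i=1}^{k} S_i$. The lines occurring across all members of $S'$ are $\bigcup_{i=1}^{k} g_iG=H$, and because distinct cosets are disjoint these $4nkm$ permutations are pairwise distinct. Hence condition (ii) holds, every member of $S'$ is strongly asymmetric, and $H$ is packed into $S'$.

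The one point demanding care, which I expect to be the crux of a fully rigorous write-up, is the simultaneous verification that entrywise application of $g$ acts as left multiplication on all four families of lines at once; this is exactly what guarantees both that strong asymmetry is preserved and that the lines of $S_i$ fill out the entire coset $g_iG$ rather than a proper or reshuffled subset. Once that is in place, the cardinality bookkeeping and the disjointness of the cosets complete the argument routinely.
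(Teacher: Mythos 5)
Your proposal is correct and follows essentially the same approach as the paper's proof: choosing left-coset representatives of $G$ in $H$ and applying each representative entrywise to each matrix in $S$. You simply spell out the verification (entrywise application acts as left composition on all four kinds of lines, hence preserves the Latin property, strong asymmetry, and translates the line set to the corresponding coset) that the paper leaves implicit.
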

\begin{proof}
Let $Q$ be a set of $|H|/4n$ distinct representatives of the (left) cosets of $G$ in $H$. Apply each representative to each matrix $M$ in S elementwise, i.e. for each $q$ in $Q$ and each $M$ form the matrix $qM$ given by $(qM)(i,j) = q(M(i,j))$.
\end{proof}

\section{Packing into Latin squares of odd order}

\begin{fact}
Let $C$ and $R$ be subgroups of a group $G$. For any element $g\in G$ the following conditions are equivalent:
\begin{enumerate}[label=(\roman*)]
\item the map from the Cartesian product set $C\times R$ to the double coset  $CgR$  that sends $(c,r)$ to $cgr$ is bijective,

\item 
the conjugate subgroup $gRg^{-1}$  intersects $C$ trivially.
\end{enumerate}
\end{fact}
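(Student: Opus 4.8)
The plan is to reduce bijectivity to injectivity and then translate injectivity directly into the triviality of the intersection $C\cap gRg^{-1}$. Surjectivity of the map $\phi\colon C\times R\to CgR$, $\phi(c,r)=cgr$, is immediate from the very definition of the double coset $CgR=\{cgr : c\in C,\ r\in R\}$, so all of the content of the equivalence sits in the assertion that injectivity of $\phi$ is the same as condition (ii).

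First I would establish (ii) $\Rightarrow$ (i). Suppose $gRg^{-1}\cap C=\{e\}$ and that two pairs collide, say $c_1 g r_1 = c_2 g r_2$ with $c_1,c_2\in C$ and $r_1,r_2\in R$. Rearranging gives $c_2^{-1}c_1 = g\,(r_2 r_1^{-1})\,g^{-1}$. The left-hand side lies in $C$ and the right-hand side lies in $gRg^{-1}$, so this common element lies in the intersection and is therefore the identity. Hence $c_1=c_2$, and cancelling then forces $r_1=r_2$, so $\phi$ is injective, and being already surjective, it is bijective.

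For the converse I would argue contrapositively, from $\neg$(ii) to $\neg$(i). Assuming the intersection is nontrivial, I pick $x\in gRg^{-1}\cap C$ with $x\neq e$ and write $x=grg^{-1}$ for some $r\in R$, noting that $r\neq e$ because $x\neq e$. Then $\phi(x,e)=xg=g r g^{-1}g=gr=\phi(e,r)$, yet $(x,e)\neq(e,r)$, so $\phi$ fails to be injective and therefore cannot be bijective.

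The argument is purely formal and presents no genuine obstacle; the only point requiring a little care is keeping the roles of $C$ and $R$ (and the direction of the conjugation by $g$) straight, so that the element produced in each direction really does land simultaneously in $C$ and in $gRg^{-1}$. A minor bookkeeping subtlety worth flagging is that $\phi$ is a map of underlying \emph{sets} and not a homomorphism, so \say{bijective} must be read set-theoretically, and in the converse the failure of injectivity must be exhibited as an explicit pair of distinct preimages, which the choice $(x,e)$ versus $(e,r)$ supplies.
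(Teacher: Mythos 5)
Your proof is correct. The paper states this result as a Fact without supplying any proof, so there is no argument of the authors to compare against; your verification---surjectivity immediate from the definition of $CgR$, the rearrangement $c_2^{-1}c_1 = g(r_2r_1^{-1})g^{-1}$ landing in $C\cap gRg^{-1}$ for one direction, and the explicit colliding pair $(x,e)$, $(e,r)$ for the other---is the standard and complete argument that the authors implicitly take for granted.
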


\begin{fact}
Let $C$ and $R$ be subgroups of a finite group $G$. The following conditions are equivalent:
\begin{enumerate}[label=(\roman*)]
\item all the double cosets have cardinality equal to Card$(C) \times$Card$(R)$,

\item 
every conjugate of $C$ meets every conjugate of $R$ trivially.
\end{enumerate}
\end{fact}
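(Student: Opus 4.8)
The plan is to reduce condition (i) to a statement about single conjugates using the preceding Fact, and then to upgrade that single-conjugate statement to the symmetric double-conjugate statement (ii) by a routine conjugation. First I would observe that for every $g \in G$ the map $(c,r) \mapsto cgr$ from $C \times R$ onto the double coset $CgR$ is always surjective, so that $|CgR| \le |C|\,|R|$ with equality precisely when this map is a bijection. Thus (i) --- that every double coset attains the maximal cardinality $|C|\,|R|$ --- holds if and only if that map is bijective for every $g$. By the previous Fact, bijectivity for a given $g$ is equivalent to $gRg^{-1} \cap C = \{e\}$. Hence (i) is equivalent to the assertion that $gRg^{-1} \cap C = \{e\}$ for all $g \in G$.

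It therefore remains to show that this last assertion is equivalent to (ii). The implication from (ii) to it is immediate: taking the conjugator of $C$ to be the identity and the conjugator of $R$ to be $g$ in (ii) yields exactly $C \cap gRg^{-1} = \{e\}$. For the converse, I would start from an arbitrary pair of conjugates $aCa^{-1}$ and $bRb^{-1}$ and conjugate the whole intersection by $a^{-1}$. Since conjugation by $a^{-1}$ is an automorphism of $G$ fixing the identity, the subgroup $aCa^{-1} \cap bRb^{-1}$ is trivial if and only if its image $C \cap (a^{-1}b)R(a^{-1}b)^{-1}$ is trivial, and the latter is guaranteed by the single-conjugate hypothesis applied with $g = a^{-1}b$ (using $b^{-1}a = (a^{-1}b)^{-1}$).

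Combining these two observations --- the reduction of (i) to the single-conjugate condition via the earlier Fact, together with the equivalence of the single-conjugate and double-conjugate conditions --- completes the equivalence of (i) and (ii). I do not expect a genuine obstacle here; the only point requiring a little care is the bookkeeping in the conjugation step, namely verifying that it suffices to let a single element $g = a^{-1}b$ range over $G$ rather than two independent conjugators, which is precisely what the automorphism argument delivers.
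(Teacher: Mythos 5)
Your proof is correct: the reduction of (i) to the bijectivity of $(c,r)\mapsto cgr$ for every $g$ (using surjectivity plus finiteness), the application of the preceding Fact to convert this into $gRg^{-1}\cap C=\{e\}$ for all $g$, and the conjugation-by-$a^{-1}$ argument showing that the single-conjugate condition already implies the two-conjugate condition (ii) are all sound. The paper states this Fact without proof, but its placement immediately after the single-coset Fact makes clear that your derivation is exactly the intended route, so there is nothing to compare beyond noting that your write-up supplies the routine details the paper omits.
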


\begin{fact}\label{fact}

Let $C$ and $R$ be two subgroups of a finite group $G$ whose orders are relatively prime. Then all the double cosets have cardinality equal to Card$(C) \times$Card$(R)$. 
\end{fact}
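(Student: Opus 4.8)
The plan is to reduce the statement to the equivalence already recorded in the second fact above: it suffices to verify that when $\mathrm{Card}(C)$ and $\mathrm{Card}(R)$ are relatively prime, every conjugate of $C$ meets every conjugate of $R$ trivially. So I would fix arbitrary elements $a,b\in G$ and consider the two conjugates $aCa^{-1}$ and $bRb^{-1}$. Since conjugation by a fixed element is an automorphism of $G$, it preserves cardinality, whence $\mathrm{Card}(aCa^{-1})=\mathrm{Card}(C)$ and $\mathrm{Card}(bRb^{-1})=\mathrm{Card}(R)$.

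Next I would observe that the intersection $D:=aCa^{-1}\cap bRb^{-1}$ is itself a subgroup of $G$, being the intersection of two subgroups, and that $D$ is a subgroup of each of the two conjugates separately. By Lagrange's theorem $\mathrm{Card}(D)$ therefore divides both $\mathrm{Card}(aCa^{-1})=\mathrm{Card}(C)$ and $\mathrm{Card}(bRb^{-1})=\mathrm{Card}(R)$. Consequently $\mathrm{Card}(D)$ divides $\gcd(\mathrm{Card}(C),\mathrm{Card}(R))=1$, forcing $\mathrm{Card}(D)=1$, i.e. $D=\{e\}$. As $a$ and $b$ were arbitrary, every conjugate of $C$ meets every conjugate of $R$ trivially, and the second fact then yields the claim.

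Alternatively, and somewhat more directly, I could invoke the first fact in place of the second: for each fixed $g$ the conjugate $gRg^{-1}$ has cardinality $\mathrm{Card}(R)$, so $gRg^{-1}\cap C$ is a subgroup whose order divides both $\mathrm{Card}(R)$ and $\mathrm{Card}(C)$ and is hence trivial; the first fact then asserts that $(c,r)\mapsto cgr$ is a bijection from $C\times R$ onto $CgR$, whence $\mathrm{Card}(CgR)=\mathrm{Card}(C)\,\mathrm{Card}(R)$ for every $g$.

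I do not expect a genuine obstacle here: the entire content is Lagrange's theorem combined with the fact that conjugation is order-preserving. The only point meriting a moment's care is the logical passage from \say{the intersection of two specific conjugates is trivial} to \say{every conjugate of $C$ meets every conjugate of $R$ trivially}, which is handled simply by letting the conjugating elements range over all of $G$; after that the desired cardinality statement is immediate from the previously established equivalences.
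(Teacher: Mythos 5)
Your proof is correct. The paper states this Fact without any proof of its own, placing it immediately after the two equivalences you invoke, so your derivation --- conjugation preserves cardinality, the intersection of the two conjugates is a subgroup, Lagrange's theorem forces that intersection to be trivial since its order divides $\gcd(\mathrm{Card}(C),\mathrm{Card}(R))=1$, and then either of the preceding Facts yields the cardinality claim --- is exactly the intended (and standard) argument that the paper leaves to the reader.
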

The following theorem will be proved by an appropriate application of the \say{addition square} construction appearing in Gilbert \cite{gil}.

\begin{theorem}
If $n$ is a positive odd number at least 5, then the full symmetric group $S_n$ can be packed into some set of strongly asymmetric Latin squares.
\end{theorem}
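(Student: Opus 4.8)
The plan is to partition $S_n$ into blocks of $4n$ permutations, realise each block as the set of all lines of one strongly asymmetric Latin square, and thereby pack $S_n$ into the resulting $(n-1)!/4$ squares (the count is an integer because $4\mid(n-1)!$ for $n\ge5$). The organising principle for the blocks is the double coset machinery: take $R\le S_n$ to be a regular cyclic group of order $n$ and $C\le S_n$ a group of order $4$; since $n$ is odd, $\gcd(|C|,|R|)=1$, so Fact~\ref{fact} forces every double coset $CgR$ to have exactly $4n$ elements, and these double cosets partition $S_n$ into blocks of the required size. The remaining, and essential, task is to manufacture from each block a strongly asymmetric Latin square whose rows, columns, reverse rows and reverse columns are precisely the $4n$ permutations of that block; once this is done, disjointness of distinct double cosets guarantees that no line is shared, i.e.\ that the squares form a packing.

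For the realisation I would use the addition square, twisted to become strongly asymmetric. Let $R$ be realised as the translations of an abelian group $A$ of order $n$, with the elements of $A$ enumerated so that reversing a sequence corresponds to inversion $\iota\colon x\mapsto x^{-1}$ (an automorphism of order $2$, since $A$ has odd order), and let $\theta\in\operatorname{Aut}(A)$ be a second automorphism. With rows indexed via $\theta$, setting $M(i,j)=\theta(i)+j$ gives a Latin square whose four kinds of lines are the cosets $R,\ \theta R,\ \iota R,\ \iota\theta R$, with the four ``multipliers'' $\mathrm{id},\theta,\iota,\iota\theta$. If these four automorphisms are distinct the $4n$ lines are distinct, so $M$ is strongly asymmetric; and if in addition $\{\mathrm{id},\theta,\iota,\iota\theta\}$ is a subgroup of $\operatorname{Aut}(A)$ of order $4$, then the four cosets assemble into a single subgroup $H_0$ of order $4n$ that is exactly the line set of $M$. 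In that favourable situation $H_0$ is packed into the one square $M$, and Proposition~\ref{els} instantly upgrades this to a packing of $S_n\supseteq H_0$ via the elementwise translates $gM$.

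It remains to choose $A$ and $\theta$ so that $\{\mathrm{id},\theta,\iota,\iota\theta\}$ closes to an order-$4$ group. Since $\iota$ is central in $\operatorname{Aut}(A)$, this happens as soon as $\operatorname{Aut}(A)$ contains either a second involution $\theta\neq\iota$ (giving a Klein four group) or an element $\theta$ of order $4$ with $\theta^2=\iota$ (giving a cyclic group). I would verify that such an $A$ exists for every odd $n\ge5$ that is not a prime: for $n$ with at least two prime factors one may take $A$ cyclic, whose unit group has several involutions, and for $n=p^k$ with $k\ge2$ one may take $A$ elementary abelian of rank $\ge2$, whose automorphism group $\operatorname{GL}(k,p)$ contains the involution $\operatorname{diag}(1,\dots,1,-1)\neq\iota$. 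Thus Proposition~\ref{els} settles all odd composite $n$, and among primes it settles $p\equiv1\pmod4$ as well, since then the cyclic group $\mathbb{Z}_p^\ast$ contains an element of order $4$.

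The hard part, and the genuine obstacle, is the single residual family $n=p$ with $p\equiv3\pmod4$. Here the only group of order $n$ is $\mathbb{Z}_p$, and its automorphism group $\mathbb{Z}_p^\ast$ is cyclic of order $p-1\equiv2\pmod4$, so it has no element of order $4$ and only one involution, namely $\iota=-1$; the four multipliers can never close to a group of order $4$. Consequently there is no subgroup of order $4p$ available to feed into Proposition~\ref{els}, and the coset-tiling of the previous paragraphs collapses---indeed $\mathbb{Z}_p^\ast/\langle-1\rangle$ has odd order, so its cosets cannot be paired into the size-$4$ patches the twisted addition square needs. For these primes one is forced to exploit the full double coset count of Fact~\ref{fact} directly, with $|C|=4$ merely coprime to $p$ rather than sitting inside the affine normaliser of $R$, and to realise each block by a strongly asymmetric Latin square that is not of affine type. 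I expect this realisation step---reconciling the transpose-and-reversal relations among the four line families with an order-$4$ group that the multiplicative structure refuses to supply---to be the decisive and most delicate part of the argument, most plausibly handled by a bespoke, non-affine modification of the addition square.
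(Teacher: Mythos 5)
Your proposal has a genuine gap, and you name it yourself: the primes $n=p\equiv 3\pmod 4$ are not handled, so the theorem remains unproved for infinitely many of the values it covers ($n=7,11,19,\ldots$). The gap is not a matter of missing details but of a strategy that cannot be repaired in the form you set up. Your main route --- realize a subgroup $H_0\le S_n$ of order $4n$ as the line set of a single strongly asymmetric square and then apply Proposition \ref{els} --- is provably impossible for these primes: by Proposition \ref{pr10} (Appendix), $S_p$ has no subgroup of order $4p$ at all when $p\equiv 3\pmod 4$, which is exactly why your multiplier set $\{\mathrm{id},\theta,\iota,\iota\theta\}$ can never close into a group of order $4$ (the quartet obstruction of Proposition \ref{ekvi}). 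Your fallback --- partition $S_n$ into $(C,R)$-double cosets of size $4n$ with $|C|=4$, $|R|=n$, and realize each block as the lines of one square --- also fails for a structural reason: the line set of any Latin square is closed under right composition with the reversal permutation $\rho$ (every column has its reverse column in the set), whereas a double coset $CgR$ with $\rho\notin R$ need not be. Since $\rho$ normalizes your translation group $R$, right multiplication by $\rho$ permutes the $(C,R)$-double cosets, and in general it moves $CgR$ to a different block; such blocks cannot be the line set of any Latin square, however the square is constructed, so the partition itself is unusable.

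The paper's proof avoids needing any order-$4$ group by never requiring the $4n$ lines of one square to form a group or a single block. It takes $C$ to be the cyclic group of order $n$ generated by the shift $(2,\ldots,n,1)$ and $R=\{e,\rho\}$ of order $2$; by Fact \ref{fact} every double coset $CgR$ then has size $2n$, and there are $(n-1)!/2$ of them, an even number. Pairing the double cosets arbitrarily and choosing representatives $p\in A$, $q\in B$ from each pair, the addition square $M(i,j)=p(i)+q(j)$ has columns together with reverse columns equal to $A=CpR$, and rows together with reverse rows equal to $B=CqR$; closure under reversal is automatic because $\rho$ sits inside $R$ on the right. This single uniform construction covers all odd $n\ge 5$, including your problematic primes: letting each square carry two distinct double cosets of size $2n$ is precisely what replaces the group structure that your approach needs and, for $p\equiv 3 \pmod 4$, cannot have.
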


\begin{proof}
Fact \ref{fact} applies in particular if $G$ is the full symmetric group of all permutations of the $n$-element set $\{1,\ldots,n\}$, for odd $n\geq 5$, $C$ is the subgroup generated by the circular shift permutation whose sequence representation is $(2,\ldots,n,1)$, and $R$ is the two-element subgroup containing the reversal permutation whose sequence representation is $(n,\ldots,1)$. 

All double cosets $CgR$ have cardinality $2n$. The number $(n-1)!/2$ of double cosets is even. 
Arrange the double cosets in matched pairs.
From each matched pair $(A,B)$ of double cosets choose representatives $p,q$ from $A,B,$ respectively.
Construct the matrix $M$ given by  $M(i,j) = p(i) + q(j)$.

The columns and reverse columns of $M$ are precisely the members of the double coset $A$, and the rows and reverse rows of $M$ are the members of the double coset $B$.
In the set of $(n-1)!/4$ matrices so constructed every permutation occurs exactly once as a row, column, reverse row or reverse column.
\end{proof}

\begin{remark}
The above construction does not work for even $n$, because the circular shift applied $n/2$ times is then a conjugate of the reversal (has similar cyclic structure).
\end{remark}

\section{Packing into Latin squares of even order}

\begin{definition} An $m$-by-$m$ matrix (possibly Latin square) is called a \textit{double occurrence matrix} if every sequence appearing as a {line} (row, column, reverse row, or reverse column) appears exactly twice, and every sequence appearing as a row of index $i$ ($i$-th row) also appears as the column of the same index $i$. A set of double occurrence matrices is called a \textit{double occurrence matrix set} if no sequence appears as a line in more than one of the matrices. The set of permutations whose sequence representations appear as lines in a double occurrence matrix or matrix set is called the \textit{set covered} by the matrix or matrix set.
\end{definition}
\begin{proposition}\label{hus}  For every $m>2$ there is a double occurrence matrix set covering all permutations of $1,\ldots,m$. \end{proposition}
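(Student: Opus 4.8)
The plan is to realize each double occurrence matrix as a symmetric Latin square built from a single left coset of a fixed subgroup of $S_m$, and then to let the matrices range over a transversal of that subgroup. First I would record the structural content of the definition: the $i$-th row of a matrix $M$ equals its $i$-th column exactly when $M(i,j)=M(j,i)$ for all $i,j$, so a double occurrence matrix is forced to be symmetric; and if in addition its rows are permutations, then every column equals the corresponding row, whence $M$ is automatically a symmetric Latin square. Writing $\rho$ for the reversal permutation $\rho(j)=m+1-j$, the reverse of the $i$-th row $r_i$ is the permutation $r_i\circ\rho$, so the set of permutations covered by such an $M$ is $\bigcup_i\{r_i,\,r_i\rho\}=\{r_1,\dots,r_m\}\langle\rho\rangle$, a union of right cosets of $\langle\rho\rangle$.

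Next I would fix on $\{1,\dots,m\}$ the cyclic group $\mathbb{Z}_m$ and let $\widetilde G=\{L_a:a\in\mathbb{Z}_m\}\le S_m$ be its regular representation, where $L_a(x)=a+x$. Since $\rho$ is an involution, a direct computation gives $\rho L_a\rho=L_{-a}$, so $\rho$ normalizes $\widetilde G$; and since $\rho$ reverses the linear order while every $L_a$ preserves it, one checks that $\rho\notin\widetilde G$ precisely when $m>2$. Hence for $m>2$ the set $H=\widetilde G\langle\rho\rangle=\widetilde G\cup\widetilde G\rho$ is a subgroup of $S_m$ of order $2m$.

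For each $\pi\in S_m$ I would then form the matrix $M_\pi(i,j)=\pi(i+j)$. Commutativity of $\mathbb{Z}_m$ makes $M_\pi$ symmetric; its $i$-th row is the permutation $\pi L_i$, so $M_\pi$ is a symmetric Latin square; and its reverse rows are the permutations $\pi L_i\rho$. Thus the set covered by $M_\pi$ is exactly the left coset $\pi H$. The condition $\rho\notin\widetilde G$ guarantees that no row coincides with any reverse row, so each of the $2m$ permutations of $\pi H$ occurs as a line exactly twice, and $M_\pi$ is a genuine double occurrence matrix. Finally, letting $\pi$ run over a transversal of the $(m-1)!/2$ left cosets of $H$ yields matrices whose covered sets are the distinct cosets of $H$; these partition $S_m$, so no sequence is a line of two different matrices, and the resulting double occurrence matrix set covers every permutation of $1,\dots,m$.

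The main obstacle is the group-theoretic bookkeeping of the second step: everything hinges on $H=\widetilde G\rtimes\langle\rho\rangle$ being a bona fide subgroup of order $2m$, which requires verifying both that reversal normalizes the translation group and that it lies outside it, the latter failing exactly at $m=2$ and thereby explaining the hypothesis $m>2$. Once $H$ is identified as a subgroup, the double occurrence conditions for each $M_\pi$ and the disjointness across the family reduce to the single fact $\rho\notin\widetilde G$ together with the coset partition, so no delicate estimates remain.
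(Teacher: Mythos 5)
Your proof is correct and takes essentially the same route as the paper: the base matrix $M(i,j)=i+j \pmod{m}$, whose lines form the order-$2m$ group generated by the translations and the reversal $\rho$, translated entrywise by a system of representatives of the left cosets of that group in $S_m$. You merely make explicit the group-theoretic verifications (forced symmetry, $\rho\notin\widetilde{G}$ exactly when $m>2$, the coset partition) that the paper's proof leaves implicit.
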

\begin{proof}
First we observe that for every $m > 2$, there is an $m$-by-$m$  double occurrence matrix such that the lines appearing in the matrix constitute the sequence representations of a permutation group on the numbers $1,\ldots,m$. Indeed, such a matrix can be defined  by letting the element of the matrix in row $i$ and column $j$ be the number $i+j$, with addition defined 
modulo $m$.

Secondly, let $M$ be any $m$-by-$m$ double occurrence matrix whose lines constitute a permutation group $G$. Let $R$ be any complete system of representatives of the left cosets of $G$ in the full symmetric group of all permutations of $1,\ldots,m$. Applying the various permutations $r \in R$ (viewed as functions) entrywise to $M$ we obtain the matrix set required.
\end{proof}

\begin{remark}
The matrix set whose existence is stated in Proposition \ref{hus} necessarily consists of $m!/2m = (m-1)!/2$ matrices.
\end{remark}

For $k=1,...,m$ let $B_{k_0}$ (respectively $B_{k_1}$) denote the $2$-by-$2$ matrix with first row $2k-1, 2k$, second row $2k, 2k-1$ (respectively first row $2k,2k-1$, second row $2k-1,2k$).

For an $m$-by-$m$ Latin square $L$ and each $m$-by-$m$ Boolean matrix $A$ ($0-1$ matrix), let the $2m$-by-$2m$ \textit{composite matrix}  $L*A$  be defined by replacing each entry of $L$ in row $i$ and column $j$ whose value is $k$ by the $2$-by-$2$ block $B_{k_{A(i,j)}}$ (for an example see Figure \ref{fig}).

\begin{figure}[ht!]
  
    \begin{subtable}{.2\linewidth}
      \centering
      
      L= \begin{tabular}{ |c|c|c| } 
 \hline
 1 & 3 & 2 \\
 \hline
 2 & 1 & 3 \\ 
 \hline
 3 & 2 & 1 \\ 
 
 \hline
\end{tabular}
    \end{subtable}%
    \begin{subtable}{.2\linewidth}
      \centering
       
         A= \begin{tabular}{ |c|c|c| } 
 \hline
 
 1 & 0 & 1 \\
 \hline
 0 & 0 & 1 \\ 
 \hline
 0 & 1 & 0 \\

 \hline
\end{tabular}
    \end{subtable} 
     \begin{subtable}{.4\linewidth}
      \centering
      $\longrightarrow L*A=$
      \begin{tabular}{ |c|c||c|c||c|c| } 
 \hline
 2 & 1 & 5 & 6 & 4 & 3\\\hline
 1 & 2 & 6 & 5 & 3 & 4\\\hline
 \hline
 3 & 4 & 1 & 2 & 6 & 5\\\hline
 4 & 3 & 2 & 1 & 5 & 6\\\hline
 \hline
 5 & 6 & 4 & 3 & 1 & 2\\\hline
 6 & 5 & 3 & 4 & 2 & 1\\\hline

\end{tabular}
    \end{subtable} 
     \caption{}
     \label{fig}
\end{figure}

\begin{remark}\label{kas}
Both $L$ and $A$ are encoded into the composite $L*A$ without loss of information: $A(i,j)$ is $0$ or $1$ according to whether $(L*A) (2i-1, 2j-1)$ is smaller or larger than $(L*A) (2i-1, 2j)$, and $L(i,j)$ is half the larger of these two entries of $L*A$.
\end{remark}
\begin{proposition}\label{fos} For every even number $2m>4$, there is a set $S$ of Latin squares of order $2m$, such that every sequence representing a permutation preserving the partition of the numbers $1,2,\ldots,2m-1,2m$ into pairs of consecutive numbers occurs exactly once as a line of some member of $S$. \end{proposition}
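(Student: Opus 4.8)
The plan is to realise every required Latin square as a composite $L*A$, exploiting two observations that are immediate from the block structure of $B_{k_0},B_{k_1}$: first, whenever $L$ is a Latin square of order $m$, the composite $L*A$ is a Latin square of order $2m$; second, every line of $L*A$ represents a permutation preserving the partition into consecutive pairs, since each consecutive pair of a line is always $\{2k-1,2k\}$ for some $k$. Thus the composites automatically land inside the target group, and the whole task reduces to producing each partition-preserving permutation exactly once.

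First I would fix coordinates on the target group, encoding a partition-preserving permutation by a pair $(\sigma,\epsilon)\in S_m\times\{0,1\}^m$, where $\sigma$ records the block into which the $j$-th consecutive output pair falls and $\epsilon_j$ records whether that pair is ascending or descending; this is a bijection onto the $2^m m!$ partition-preserving permutations. Reading the definition of $*$ in these coordinates, the block part $\sigma$ of any line of $L*A$ is exactly a line of $L$, while the sign part $\epsilon$ is a row or column of $A$, possibly complemented (passing from the top to the bottom of a block) or reversed (passing to a reverse line). Concretely, rows $2i-1,2i$ give $\sigma=$ (row $i$ of $L$) with $\epsilon\in\{A_{i,\cdot},\overline{A_{i,\cdot}}\}$, columns $2j-1,2j$ give $\sigma=$ (column $j$ of $L$) with $\epsilon\in\{A_{\cdot,j},\overline{A_{\cdot,j}}\}$, and the reverse lines carry the reversals of these.

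The key step is to take $L$ from a double occurrence matrix set $\mathcal D$ covering all of $S_m$ (Proposition \ref{hus}, valid since $m>2$, matching $2m>4$). Because row $i$ and column $i$ of such an $L$ coincide, a single $\sigma$ then receives four sign vectors per composite, namely $A_{i,\cdot},\overline{A_{i,\cdot}},A_{\cdot,i},\overline{A_{\cdot,i}}$, drawing on both a row and a column of $A$ at once; and since $\mathcal D$ covers $S_m$ with no repeated line, each $\sigma\in S_m$ occurs at exactly one position of exactly one $L\in\mathcal D$. Producing each $(\sigma,\epsilon)$ exactly once therefore collapses into a single \emph{design problem} about Boolean matrices: find a family $\mathcal A$ of $2^{m-2}$ matrices such that, for every index $i$, the complementary-pair classes of row $i$ and of column $i$, taken over all $A\in\mathcal A$, run through all $2^{m-1}$ complementary pairs of $\{0,1\}^m$ exactly once. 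Reversal commutes with complementation, so once the forward lines are accounted for, the reverse lines (which carry the block parts $\sigma^{R}$, distinct from the forward $\sigma$ in a double occurrence matrix) are covered automatically, and no permutation is counted twice.

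This design problem is the crux and the only place I expect real difficulty. I would resolve it with the explicit family $A(i,j)=x_i+y_j$ for parameters $x,y\in\{0,1\}^m$: every row of such an $A$ then lies in the complementary class $[y]$ and every column in the class $[x]$, uniformly in $i$. Writing $V=\{0,1\}^m/\langle(1,\ldots,1)\rangle$ for the $2^{m-1}$ complementary classes, I would split $V=V_x\sqcup V_y$ into two halves of size $2^{m-2}$, fix a bijection between them, and let $\mathcal A$ consist of the $2^{m-2}$ matrices arising from representative pairs $(x,y)$ with $[x]\in V_x$ matched to $[y]\in V_y$; the condition $[x]\neq[y]$ simultaneously forces the four sign vectors at each $\sigma$ to be distinct, giving strong asymmetry. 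Taking all composites $L*A$ with $L\in\mathcal D$ and $A\in\mathcal A$ then yields $\tfrac{(m-1)!}{2}\cdot 2^{m-2}$ Latin squares carrying $8m\cdot\tfrac{(m-1)!}{2}\cdot 2^{m-2}=2^m m!$ pairwise distinct lines, that is, each partition-preserving permutation exactly once, as required.
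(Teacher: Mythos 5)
Your proof is correct and takes essentially the same route as the paper's: you compose Latin squares $L$ from the double occurrence set of Proposition \ref{hus} with Boolean matrices $A(i,j)=x_i+y_j$ obtained by pairing up the $2^{m-1}$ complementary classes, which is exactly the paper's family $A_{vw}(i,j)=v(i)+w(j)$ built from a partition into pairs of a complement-free set of representatives. The only difference is presentational: you verify coverage bijectively in the $(\sigma,\epsilon)$ coordinates via an explicit design condition, whereas the paper argues distinctness of lines from the double occurrence and complement-free properties and then concludes by counting.
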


\begin{remark}
The set of permutations packed into the set $S$ of Latin squares according to this proposition constitutes a subgroup of order $m!2^m$ of the full symmetric group $S_{2m}$.
\end{remark}

\begin{proof}
Partition into pairs $\{v,w\}$ in any manner the set of $0-1$ vectors of length $m$ whose first component is $0$ (actually partitioning into pairs of any complement-free set of $2^{m-1}$ vectors will do). Order each pair $\{v,w\}$ arbitrarily into an ordered pair $(v,w)$. For each such ordered pair define the $m$-by-$m$  Boolean matrix $A_{vw}$ by $A_{vw}(i,j) = v(i) + w(j)$, with addition modulo $2$.

We obtain thus a set of $2^{m-2}$ Boolean matrices with the property that if a vector $v$ appears as the $i$-th row in one of the matrices, then the $i$-th column of that matrix is neither $v$ nor its Boolean complement.

Let $E$ be a double occurrence matrix set covering all permutations of $1,\ldots,m$, which exists according to Proposition \ref{hus} and consists of $(m-1)!/2$ matrices.

Let $S$ consist of all the composite matrices $(L*A)_{vw}$, where $L$ can be any member of $E$. In view of Remark \ref{kas}, the set $S$ consists of  $(m-1)! 2^{m-3}$  matrices.
The key to verifying that no vector can appear twice as a line anywhere in $S$, is to note that because of the double occurrence property of $E$ the only possible coincidences to worry about concern rows and columns of the same member matrix of $S$, but such coincidences are excluded due to the complement-free property of the set of Boolean vectors used to define the matrices $A_{vw}$.

The proof is concluded by observing that the number of (distinct) lines appearing in members of $S$ is $(m-1)! 2^{m-3} \cdot 8m = m! 2^m$, which is the number of the partition-preserving permutations specified in the statement of the proposition.
\end{proof}

\begin{theorem}
If $n$ is a positive even number at least $6$, then the full symmetric group $S_{n}$ can be packed into some set of strongly asymmetric Latin squares.
\end{theorem}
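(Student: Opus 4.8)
The plan is to deduce this theorem from the two results already established, namely Proposition \ref{fos} and Proposition \ref{els}, reducing the even case to the packing of a single convenient subgroup. The combinatorial substance has already been carried out in Proposition \ref{fos}; what remains is to identify the subgroup it packs and then enlarge the packing up to all of $S_n$.

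First I would write the even number $n$ as $n=2m$. Since $n\geq 6$ we have $m\geq 3$, so in particular $2m>4$ and Proposition \ref{fos} applies. Let $G$ denote the group of all permutations of $\{1,\ldots,2m\}$ that preserve the partition of these numbers into the pairs of consecutive numbers $\{1,2\},\{3,4\},\ldots,\{2m-1,2m\}$; this is the stabilizer of that pair partition, a subgroup of $S_{n}$ of order $m!\,2^{m}$.

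Next I would check that the set $S$ furnished by Proposition \ref{fos} is genuinely a packing of $G$ in the sense of the definition, not merely a family of Latin squares whose lines happen to exhaust $G$. Proposition \ref{fos} asserts that every sequence representing a member of $G$ occurs \emph{exactly once} as a line of some member of $S$. This phrase delivers both packing conditions at once: it forbids any permutation from appearing as two distinct lines of one matrix, which is precisely strong asymmetry of each member (condition (i)), and it forbids a single line from recurring in two different members (condition (ii)). Hence $S$ is a packing of the subgroup $G$ into strongly asymmetric Latin squares.

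Finally I would apply Proposition \ref{els} with the subgroup $G$ just packed and with $H=S_{n}$, which contains $G$. The proposition then yields a set of strongly asymmetric Latin squares into which all of $S_{n}$ is packed, obtained by applying a complete system of left coset representatives of $G$ in $S_{n}$ entrywise to the matrices of $S$. I expect no genuine obstacle at this stage: the only point requiring care is the verification in the previous paragraph that the \say{exactly once} statement of Proposition \ref{fos} entails both defining conditions of a packing, after which the conclusion follows immediately from Proposition \ref{els}.
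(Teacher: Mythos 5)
Your proposal is correct and follows exactly the paper's own route: the paper's proof is the one-line observation that Proposition \ref{fos} packs a subgroup of $S_n$ and Proposition \ref{els} then extends the packing to all of $S_n$. Your additional verification that the \say{exactly once} phrasing of Proposition \ref{fos} yields both defining conditions of a packing is a useful elaboration of a point the paper leaves implicit, but it is not a different argument.
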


\begin{proof}
Proposition \ref{fos} packs a subgroup of the full symmetric group. Combining with Proposition \ref{els} yields the result.
\end{proof}

\section{Packing a permutation group into a single Latin square}

In this section we give a construction showing that if $n$ is either a composite odd integer or a prime congruent to $1 \mod 4$, then  the full symmetric group $S_n$ has a subgroup --- necessarily of order $4n$ --- that can be packed into a single strongly asymmetric Latin square. The construction will use basic properties of finite rings, where by a ring we always mean a commutative ring with unit element 1. 

In every finite cyclic group $\mathbb{Z}_m$ there is an element $u$ such that $x+x=u$ is only possible for at most one element $x$.  For this we can take $u=-1$. Taking direct products we can see that in every finite Abelian group there is an element $u$ such that $x+x=u$ is only possible for at most one element $x$.

Thus every finite ring $R$ has an element  $u$  such that  $2x=u$ has at most one solution $x$. With such an element  $u$  of $R$ being fixed, for each element $x$ of $R$ denote $u-x$ by $x'$. Mapping $x$ to $x'$ defines an involution on $R$, in this paper called \textit{reflection} (with respect to an element  $u$), which has at most one fixed point. It has a fixed point if and only if $2x=u$ has a solution $x$ (which happens if and only if the number of elements of $R$ is odd).

\begin{proposition}
 Given a reflection $x \mapsto x'$ of an $n$-element ring $R$ (with respect to an element   $u$), the elements of $R$ can be enumerated  $z_1,\ldots,z_n$ in such a way that the reverse sequence $z_n,\ldots,z_1$ coincides with the reflected sequence $z_1',\ldots,z'_{n}$.
\end{proposition}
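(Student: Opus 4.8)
The plan is to reinterpret the desired enumeration as a statement about two involutions having the same cycle structure. Writing the enumeration as a bijection $z \colon \{1,\ldots,n\} \to R$ with $z(i) = z_i$, the required identity $z_{n+1-i} = z_i'$ becomes $z(n+1-i) = u - z(i)$. I would introduce the reversal map $\rho(i) = n+1-i$ on the index set $\{1,\ldots,n\}$ and the reflection $\sigma(x) = x' = u - x$ on $R$. Both are involutions, and the condition we want is precisely the intertwining relation $z \circ \rho = \sigma \circ z$; that is, the sought enumeration $z$ is exactly a bijection conjugating $\rho$ into $\sigma$.

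First I would record the fixed-point counts of the two involutions. The reversal $\rho$ fixes an index $i$ exactly when $i = n+1-i$, which has a solution --- the central index $(n+1)/2$ --- if and only if $n$ is odd; thus $\rho$ has one fixed point for odd $n$ and none for even $n$, with the remaining indices falling into $\lfloor n/2\rfloor$ transpositions $\{i, n+1-i\}$. The reflection $\sigma$ fixes $x$ exactly when $2x = u$, and the discussion preceding the proposition shows that this equation has exactly one solution when $n$ is odd and none when $n$ is even; the remaining elements of $R$ split into $\lfloor n/2\rfloor$ two-element orbits $\{x, x'\}$.

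Since a permutation of a finite set is determined up to conjugacy by its cycle type, and two involutions of $n$-element sets share the same cycle type precisely when they have the same number of fixed points, the involutions $\rho$ and $\sigma$ are conjugate by some bijection, and any such bijection $z$ is the desired enumeration. Concretely I would build $z$ by matching orbits: if $n$ is odd, send the central index $(n+1)/2$ to the unique $\sigma$-fixed point; then match each transposition $\{i, n+1-i\}$ of $\rho$ to a two-element orbit $\{x, x'\}$ of $\sigma$, declaring $z(i) = x$ and $z(n+1-i) = x'$. Because $\sigma^2 = \mathrm{id}$, these assignments are consistent at both ends of every pair, so $z$ is a well-defined bijection.

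I do not expect a serious obstacle here. The only point that genuinely requires care is using the \emph{exact} fixed-point count of the reflection, not merely the bound of at most one, which forces the parity dichotomy to be invoked explicitly --- and that count has already been established in the paragraph preceding the statement. The final verification that the orbit-matching bijection satisfies $z(n+1-i) = (z(i))'$ at every index is then an immediate consequence of the involutivity of $\sigma$.
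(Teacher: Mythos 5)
Your proof is correct and takes essentially the same route as the paper: both arguments partition $R$ into reflection-orbits (two-element pairs plus at most one fixed point, with the parity of $n$ governing which case occurs) and build the enumeration by placing one element of each pair at index $i$ and its reflected partner at index $n+1-i$, the fixed point (if any) going to the central index. Your conjugacy-of-involutions framing ($z\circ\rho=\sigma\circ z$) is a more abstract packaging of the same orbit-matching construction the paper carries out directly by case analysis.
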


\begin{proof}If the reflection has no fixed point, $n$ is even and the elements of $R$ can be partitioned into $n/2$ pairwise disjoint pairs so that the reflection exchanges the elements of each pair. Choose in any way one element from each pair and enumerate them in any order as $z_1,\ldots,z_{n/2}$. Then let $z_{n-i+1}= z_i'$ for $i=1,\ldots,n/2$.

If the reflection has a fixed point $y$, then the other elements of $R$ can be partitioned into  $(n-1)/2$  pairwise disjoint pairs so that the reflection exchanges the elements of each pair. Again, choose in any way one element from each pair and enumerate them in any order as $z_1,\ldots,z_{(n-1)/2}$.  Let $z_{(n+1)/2}= y$  and let  $z_{n-i+1}= z_i'$ for $i=1,\ldots,(n-1)/2$.
\end{proof} 

Fixing a reflection $x \mapsto x'$ of the ring $R$ (with respect to an element   $u$), and an enumeration $z_1,...,z_n$, we shall say that this  enumeration is  \textit{reflectable} if $z'_i=z_{n-i+1}$ for all $i$.

\begin{fact}
For every finite ring there exists a reflectable enumeration of its elements (with respect to an element   $u$).
\end{fact}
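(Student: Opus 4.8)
The plan is to read this Fact directly off the two ingredients already assembled in this section, since together they essentially contain it. First I would recall that the opening paragraph of this section shows that every finite ring $R$ admits an element $u$ for which the equation $2x = u$ has at most one solution; indeed, taking $u = -1$ works in a cyclic group, and the property passes to finite direct products and hence to the additive group of any finite ring. Fixing such a $u$ determines the reflection $x \mapsto x' = u - x$, an involution on $R$ having at most one fixed point.

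Having fixed this reflection, I would then simply invoke the preceding Proposition, applied to precisely this reflection. That Proposition produces an enumeration $z_1, \ldots, z_n$ of the $n$ elements of $R$ for which the reverse sequence $z_n, \ldots, z_1$ coincides with the reflected sequence $z_1', \ldots, z_n'$. The remaining step is to unwind the definition of reflectability just introduced: two sequences of equal length coincide exactly when their $i$-th entries agree for every index $i$, that is, when $z_{n-i+1} = z_i'$ for all $i$. This is verbatim the condition for the enumeration $z_1, \ldots, z_n$ to be reflectable, so the enumeration furnished by the Proposition is itself a reflectable enumeration, and existence follows.

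I expect no genuine obstacle here, as the combinatorial content --- partitioning $R$ into reflection-swapped pairs, together with an optional central fixed point in the odd case, and then interleaving these pairs symmetrically about the centre of the list --- has already been carried out in the proof of the Proposition. The only care needed is the purely notational check that the indexing of the reverse sequence matches the stated definition of reflectability; this is bookkeeping rather than mathematics, and once it is confirmed the Fact is immediate.
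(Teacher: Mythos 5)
Your proposal is correct and matches the paper's intent exactly: the paper states this Fact without proof precisely because it is the immediate combination of the preceding paragraph (every finite ring admits a reflection with respect to some $u$) and the preceding Proposition (any such reflection admits a compatible enumeration), restated in the newly introduced terminology. Your unwinding of the definition of reflectability is the same bookkeeping the paper leaves implicit.
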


\begin{definition}
We shall say that in a finite ring $R$ a $4$-element subgroup $G$ of the multiplicative group of units is a \textit{quartet} if it consists of two distinct elements and their negatives. (Necessarily $1$ and $-1$ belong then to the quartet $G$.)

\end{definition}
\begin{lemma}\label{lem1}
If $q$ is a prime power congruent to $1$ mod $4$, then the finite field $GF(q)$ has a quartet.
\end{lemma}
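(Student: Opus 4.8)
The plan is to exploit the well-known fact that the multiplicative group $GF(q)^\times$ is cyclic, together with the divisibility $4 \mid q-1$ that follows immediately from the hypothesis $q \equiv 1 \pmod 4$.

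First I would record that $q \equiv 1 \pmod 4$ forces $q$ to be odd, so that $GF(q)$ has odd characteristic and in particular $1 \neq -1$. This is what makes $\{1,-1\}$ a genuine two-element subset and what will guarantee that the quartet we produce actually has four distinct members. Next I would invoke the cyclicity of $G := GF(q)^\times$, a group of order $q-1$. Since $4 \mid q-1$, the cyclic group $G$ has a (unique) subgroup of order $4$; I would fix a generator $a$ of it, that is, an element of multiplicative order exactly $4$.

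The key step is to identify $a^2$. Because $a$ has order $4$, the element $a^2$ has order $2$, so it is a root of $x^2 - 1 = (x-1)(x+1)$ other than $1$. In a field this polynomial has at most two roots, namely $\pm 1$, so the only element of order $2$ is $-1$; hence $a^2 = -1$. Consequently $a^3 = a \cdot a^2 = -a$, and the order-$4$ subgroup is
\[
\{1, a, a^2, a^3\} = \{1, a, -1, -a\} = \{1, -1, a, -a\}.
\]
This is a four-element subgroup of the group of units consisting of the two distinct elements $1$ and $a$ together with their negatives $-1$ and $-a$, which is precisely a quartet in the sense of the definition above.

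The only step that needs any care is the identification $a^2 = -1$: this is where the field structure enters, through the uniqueness of the element of order $2$, and it is also where the oddness of the characteristic is tacitly used, since in characteristic $2$ one would have $1 = -1$ and the notion of quartet would degenerate. Everything else is a direct consequence of the cyclicity of $GF(q)^\times$, so I expect no real obstacle beyond this short verification.
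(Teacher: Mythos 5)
Your proof is correct and constructs exactly the same quartet as the paper, namely $\{1,-1\}$ together with the two square roots of $-1$; the paper simply asserts that these square roots exist when $q \equiv 1 \pmod 4$, while you supply the standard justification via the cyclicity of $GF(q)^\times$ and the uniqueness of the element of order $2$. No gap, and no essential difference in approach.
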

\begin{proof}
The quartet consists of  $1$, $-1$ and the two square roots of $-1$.
\end{proof}

\begin{cexmp}
$GF(p)$ with a prime $p$ congruent to $3 \mod 4$, or $GF(q)$ with $q$ an odd power of a prime congruent to $3 \mod 4$, has no quartet.
\end{cexmp}
\vspace{-1mm}
\begin{proposition}\label{ekvi}
Let $n$ be an integer at least $5$. The following conditions are equivalent:
\begin{enumerate}[label=(\roman*)]
\item \label{i} there exists an $n$-element commutative ring $R$ with unit that has a quartet,
\item \label{ii} $n$ is not a prime congruent to $3$ mod $4$, and it is not $2$ times such a prime.
\end{enumerate}
\end{proposition}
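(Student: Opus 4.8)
The plan is to reformulate the existence of a quartet in ring-theoretic terms and then treat the two implications separately, relying on the classification of finite commutative rings as products of local rings of prime-power order. First I would observe that a quartet is precisely a four-element subgroup $\{1,-1,c,-c\}$ of the unit group: four distinctness forces $2\neq 0$ (so that $1\neq -1$) and $c\notin\{1,-1\}$, while closure under multiplication forces $c^2\in\{1,-1\}$ (since $c^2=c$ or $c^2=-c$ would give $c=\pm 1$). Thus a quartet exists in $R$ if and only if $2\neq 0$ and there is a unit $c\neq\pm 1$ with $c^2=-1$ (a \emph{cyclic} quartet) or $c^2=1$ (a \emph{Klein} quartet). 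The crucial point, easy to miss, is that Klein quartets genuinely occur: if $R=R_1\times R_2$ with $2\neq 0$ in both factors, then $c=(1,-1)$ satisfies $c^2=1$ and $c\neq\pm 1$. This is what makes condition (i) hold for almost every admissible $n$ (without it the statement would already fail at $n=21$).

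For the implication (ii)$\Rightarrow$(i) I would exhibit an explicit ring for each admissible $n$. If $n=n_1n_2$ with $n_1,n_2\geq 3$, take $R=\mathbb{Z}_{n_1}\times\mathbb{Z}_{n_2}$; since $2\neq 0$ in each factor, $(1,-1)$ is a Klein quartet. The admissible $n$ not of this factorable form are exactly $n=8$ and $n=q$ or $n=2q$ with $q$ an odd prime, and admissibility (ii) then forces $q\equiv 1\pmod 4$. For $n=q$ take $R=GF(q)$ and apply Lemma \ref{lem1} directly; for $n=2q$ take $R=\mathbb{Z}_2\times GF(q)$, where a square root $i$ of $-1$ in $GF(q)$ gives the cyclic quartet generated by $(1,i)$. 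For $n=8$ take $R=\mathbb{Z}_8$, where $3^2=1$ yields the Klein quartet $\{1,3,5,7\}$.

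For (i)$\Rightarrow$(ii) I would argue by contrapositive: if $n=p$ or $n=2p$ with $p\equiv 3\pmod 4$ prime, then no $n$-element ring has a quartet. Here the structure theorem pins the ring down uniquely: the only ring of order $p$ is $GF(p)$ and the only ring of order $2p$ is $\mathbb{Z}_2\times GF(p)$. In $GF(p)$ the equation $c^2=1$ has only the roots $\pm 1$, and $c^2=-1$ has no root because $-1$ is a non-residue modulo a prime $\equiv 3\pmod 4$. In $\mathbb{Z}_2\times GF(p)$ every unit has the form $(1,t)$, so $c^2=1$ forces $t=\pm 1$ and produces only $c\in\{1,-1\}$ (note that $-1=(1,-1)$ here, since $-1=1$ in $\mathbb{Z}_2$), while $c^2=-1=(1,-1)$ again demands $t^2=-1$ in $GF(p)$, which is impossible. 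Hence neither type of quartet exists.

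The step I expect to be the main obstacle is the bookkeeping in the construction direction: correctly identifying which admissible $n$ fail to split as a product of two integers $\geq 3$ (only $8$, the odd primes, and the twice-primes), and verifying that in precisely those residual cases admissibility (ii) supplies a prime $\equiv 1\pmod 4$ so that Lemma \ref{lem1} applies. This must be done while keeping both quartet types in play, since the cyclic construction handles the prime and twice-prime cases whereas the Klein construction handles the bulk of composite $n$ as well as the exceptional value $n=8$.
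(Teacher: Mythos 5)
Your proof is correct, and while it uses the same basic ingredients as the paper's --- direct-product rings carrying the Klein quartet $\{(1,1),(1,-1),(-1,1),(-1,-1)\}$, the field $GF(q)$ with a square root of $-1$ (Lemma \ref{lem1}), and $\mathbb{Z}_2\times GF(q)$ for the twice-prime case --- it organizes them around a genuinely different decomposition. For the constructive direction the paper splits by arithmetic type (odd composite; prime $\equiv 1 \bmod 4$; divisible by $4$; twice an odd number), and in particular needs the special quartet $\{1,-1,(n-2)/2,(n+2)/2\}\subset\mathbb{Z}_n$ for every $n$ divisible by $4$, plus a two-step reduction (its Case 4 invokes Cases 1--2) for $n=2m$ with $m$ odd. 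You instead split by factorability: the single construction $\mathbb{Z}_{n_1}\times\mathbb{Z}_{n_2}$ with $n_1,n_2\geq 3$ absorbs the paper's Cases 1, 4 (for $m$ composite) and all multiples of $4$ at once, leaving only the residual values $8$, $q$, $2q$, which you handle directly; your identification of these residual cases is accurate, and the quartet $\{1,3,5,7\}\subset\mathbb{Z}_8$ you use there is exactly the paper's Case 3 formula specialized to $n=8$. For the necessity direction both arguments begin by pinning the ring down to essentially $\mathbb{Z}_n$ (the paper via the additive group being cyclic, you via the product-of-local-rings classification), but then diverge: the paper finishes with Lagrange --- $\phi(p)=\phi(2p)=p-1\equiv 2 \bmod 4$, so the unit group has no subgroup of order $4$ --- which is shorter, whereas you first prove the useful characterization that a quartet is exactly a set $\{1,-1,c,-c\}$ with $c$ a unit, $c\neq\pm 1$, $c^2=\pm 1$, and then count roots of $x^2=\pm 1$ explicitly in $GF(p)$ and $\mathbb{Z}_2\times GF(p)$. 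That characterization (and your observation that Klein-type quartets are indispensable, e.g.\ at $n=21$) is left implicit in the paper, so your route is somewhat longer but more self-explanatory about \emph{why} quartets exist or fail to exist.
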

\begin{proof}
If \ref{ii} does not hold but \ref{i} does, then the additive group of $R$ must be cyclic, therefore $R$ is isomorphic to $\mathbb{Z}_n$. Euler's phi function takes value $n-1$ on $n$, where $n$ is a prime, or the value $(n/2)-1$ if $n$ is $2$ times a prime, thus the order of the group of units of $\mathbb{Z}_n$ is not divisible by $4$ and therefore it cannot have any $4$-element subgroup, contradicting \ref{i}.

If \ref{ii} holds, we need to examine the following cases.  

\emph{Case 1:} if $n$ is composite odd, let $n=mq$ be a proper factorization. Obviously both $m$ and $q$ are greater then $2$. The direct product ring  $\mathbb{Z}_m \times \mathbb{Z}_q$  has a quartet, namely 
$(1,1),  (-1,-1),  (1,-1),  (-1,1)$. 

\emph{Case 2}:  if $n$ is prime congruent to $1$ mod $4$ then in $\mathbb{Z}_n=GF(n)$ the element $-1$ has a square root  $r$  and $\{1,-1,r,-r \}$ is a quartet (this is a special case of Lemma \ref{lem1}).

\emph{Case 3:}  if $n$ is divisible by $4$, then in $\mathbb{Z}_n$  the residues $1$, $-1$, $(n-2)/2$ and $(n+2)/2$ form a quartet.

\emph{Case 4:}  if $n$ is of the form $2m$ where $m$ is odd, then by applying Cases 1. and 2. we see that some $m$-element ring $A$ has a quartet $Q$. Then in the direct product ring $Z_2  \times A$ the set of elements of the form $(1,q)$, where $q$ is in $Q$, is a quartet.
\end{proof}

\begin{remark}
From the above proof it is clear that when $n$ satisfies the conditions of  Proposition \ref{ekvi}, then an $n$-element ring having a quartet exists that is either a ring of residue classes of $\mathbb{Z}$, or a direct product of two such rings, or a finite field. 
\\
There is no quartet in $\mathbb{Z}_9$ because the group of units of $\mathbb{Z}_9$ has order $6$.
\\
There is no quartet in $GF(27)$ because the $26$-element group of units of $GF(27)$ cannot have any $4$-element subgroup.
\\
There is no quartet in $\mathbb{Z}_{27}$ because the order of its group of units is $18$.
\\
There is a quartet in $GF(25)$ - consisting of the units in the prime subfield - but not in the direct product ring $\mathbb{Z}_5 \times \mathbb{Z}_5$.
\\
Because non-prime fields can be replaced by direct products of rings in constructing quartets (see Case 1 in the proof of Proposition \ref{ekvi}), rings of the form $\mathbb{Z}_k \times \mathbb{Z}_m$ suffice, where $k$ may be $1$, to construct rings with a quartet having the prescribed number of elements. 
\end{remark}

\begin{proposition}\label{pr1}
If $G$ is a quartet in an $n$-element ring $R$, with $c, d$ distinct elements of $G$ such that $c$ is not the negative of $d$, and if $z_1,\ldots,z_n$ is a reflectable enumeration of the elements of $R$ (with respect to an element $u$), then the $4n$ lines of the $n$-by-$n$ matrix $M $given by $M(i,j)=
c z_i + d z_j$ are all distinct. The matrix $M$ is a Latin square.
\end{proposition}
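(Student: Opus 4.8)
The plan is to read each of the $4n$ lines of $M$ as a sequence indexed by its position $k=1,\ldots,n$ and to exhibit it as the restriction to the enumeration $z_1,\ldots,z_n$ of an affine map $g\colon R\to R$ of the shape $g(z)=\beta z+\alpha$. Since $k\mapsto z_k$ is a bijection of $R$, two lines coincide as sequences if and only if their associated maps $g$ agree on all of $R$; and because $0$ and $1$ are distinct elements of $R$, one recovers $\alpha=g(0)$ and $\beta=g(1)-g(0)$, so a line is determined by its pair $(\beta,\alpha)$. In this way the assertion that all $4n$ lines are distinct is reduced to the assertion that the corresponding $4n$ pairs $(\beta,\alpha)$ are distinct.

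First I would compute the four slopes. For rows and columns this is immediate: the $k$-th entries of row $i$ and of column $j$ are $cz_i+dz_k$ and $cz_k+dz_j$, giving slope $d$ with intercept $cz_i$, and slope $c$ with intercept $dz_j$, respectively. For the reverse lines I would invoke the reflectable property $z_{n+1-k}=z_k'=u-z_k$. The $k$-th entry of the reverse of row $i$ is $M(i,n+1-k)=cz_i+d(u-z_k)=-d\,z_k+(cz_i+du)$, so its slope is $-d$; similarly the reverse of column $j$ has $k$-th entry $c(u-z_k)+dz_j=-c\,z_k+(cu+dz_j)$, of slope $-c$. This is the step where the hypothesis genuinely does its work: the reflectable enumeration is precisely what turns the order-reversal of positions into the sign-reversal of the slope, and I expect it to be the main obstacle to get right, chiefly because the correct signs must be tracked through the substitution $u-z_k$.

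Next I would sort the $4n$ pairs into four families by slope. The quartet contains the four \emph{distinct} elements $c,-c,d,-d$: they are distinct because $1\neq -1$ in the quartet forces $2\neq 0$, while $c\neq d$ and $c\neq -d$ are hypotheses. Hence the four slope values $d,c,-d,-c$ are pairwise distinct, and no line from one family can coincide with a line from another. Within a single family the slope is fixed, so distinctness is governed by the intercept; for rows the intercepts $cz_i$ are distinct as $i$ varies because $c$ is a unit and the $z_i$ are distinct, and the identical argument — with the fixed constants $du$ and $cu$ acting merely as translations — disposes of columns, reverse rows and reverse columns. This produces $4n$ distinct pairs $(\beta,\alpha)$, hence $4n$ distinct lines.

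Finally, for the Latin square property I would observe that for fixed $i$ the row entries $cz_i+dz_j$ run over all of $R$ as $j$ varies, since $d$ is a unit and the $z_j$ exhaust $R$; symmetrically each column is a permutation of $R$ because $c$ is a unit. Every ingredient beyond the reverse-line computation is a routine unit-times-bijection argument, so once the slopes $-d$ and $-c$ are correctly extracted from the reflection, the remaining verification is formal.
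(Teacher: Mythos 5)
Your proposal is correct and takes essentially the same route as the paper: both proofs identify each line as an affine sequence $\beta z_k+\alpha$ with slope $\beta\in\{c,-c,d,-d\}$ (the reflectable enumeration converting position-reversal into sign change of the slope) and then separate lines by slope and by intercept, using evaluation at $0$ and $1$. Your $(\beta,\alpha)$-bookkeeping just packages the paper's pairwise case comparisons into one uniform classification, and you additionally spell out the Latin-square verification that the paper leaves implicit.
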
 

\begin{proof}
 A column of $M$ is a sequence of the form
\begin{equation}\label{yks}
    c z_1 + b,\ldots, c z_n + b 
\end{equation}
for some element $b$ of $R$. The corresponding reverse column is the sequence $$c z_1' +b,\ldots, c z_n' + b$$ which is
$(-c) z_1 + (b+cu), \ldots, (-c) z_n + (b+cu)$,         
i.e. it is in the form
\begin{equation}\label{kaks}
    (-c) z_1 + a,\ldots, (-c) z_n + a    
\end{equation}
for some element  $a$  of $R$.  

As for distinct columns the corresponding elements  $b$  in (\ref{yks}) are distinct, the columns of the matrix $M$ are all distinct, and by a similar argument the rows of $M$ are distinct from each other too.

If a column (\ref{yks}) were to coincide with a reverse column (\ref{kaks}), then for all $i=1,\ldots,n$ we would have  $c z_i + b = (-c) z_i + a$.  In particular, taking $z_i = 0$, we would have to have $a = b$, and then $z_i = -z_i$ for all $i$ by the invertibility of $c$, implying in particular $1= -1$, which is impossible. Thus no column is identical with a reverse column.

Suppose that a column (\ref{yks}) were to coincide with a row. The row would be of the form
\begin{equation}\label{kol}
    e + d z_1 ,\ldots, e + d z_n 
\end{equation}
for some element  $e$  of $R$.  For $i=1,\ldots,n$ we would have  $c z_i + b = e + d z_i$. Setting  $z_i=0$ would imply $e=b$. Then setting $z_i=1$ would imply $c=d$, which is impossible. Thus no column is identical with a row.  

If a column (\ref{yks}) were to coincide with the reverse of a row of the form (\ref{kol}), then the reverse column (\ref{kaks}) would coincide with (\ref{kol}). Then for all we would have  
$(-c) z_i  + a = e + d z_i$, implying  $a = e$  and then $-c = d$, which is impossible by the choice of $c$ and $d$. Thus no column is identical with a reverse row.

Summarizing what we have seen so far: no column is identical with any other line. Similarly we can conclude that no row is identical with any other line. 
\end{proof}

Given a quartet $G$ in an $n$-element ring $R$, there are $4n$ distinct permutations of the elements of $R$ that are of the form $g i + b$, where $g$ is in $G$ and $b$ is in $R$. They form a subgroup $A$ of index $n!/4n$ in the group $S_n$ of all permutations of the elements of $R$. The $4n$ permutations in $A$ appear as the lines of the matrix $M$ defined in the statement of Proposition \ref{pr1}, where the definition of $M$ is based on the reflectable enumeration $z_1,\ldots,z_n$ of the elements of $R$. Thus in fact Proposition \ref{pr1} shows:
\begin{corollary}
If for the integer $n$ there is an $n$-element ring having a quartet, then the full symmetric group $S_n$ has a subgroup (necessarily of order $4n$) that can be packed into a single Latin square (whose  $4n$ lines  are all distinct).
\end{corollary}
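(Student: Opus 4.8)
The plan is to obtain the corollary directly from Proposition~\ref{pr1} together with the paragraph preceding it; the only substantive point to verify is that the $4n$ lines of the matrix $M$ form a \emph{group}, not merely a set of $4n$ permutations. First I would fix a quartet $G$ in the given $n$-element ring $R$ and choose two of its elements $c,d$ with $c\neq d$ and $c\neq -d$, so that the hypotheses of Proposition~\ref{pr1} are met. Since a quartet has four distinct elements and contains both $1$ and $-1$, the characteristic of $R$ is not $2$, whence $c\neq -c$ and $d\neq -d$; combined with the choice of $c,d$ this makes $c,d,-c,-d$ pairwise distinct, so that $G=\{c,d,-c,-d\}$. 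I would then form the Latin square $M(i,j)=cz_i+dz_j$ of Proposition~\ref{pr1} relative to a reflectable enumeration $z_1,\ldots,z_n$ of $R$.

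Next I would identify the lines of $M$ with affine permutations. In the chosen enumeration the sequence representation of a permutation $f$ is $f(z_1),\ldots,f(z_n)$, so the $j$-th column of $M$ is the sequence representation of the map $x\mapsto cx+dz_j$, a map with multiplier $c$; as $j$ runs through $1,\ldots,n$ and $d$ is a unit, the constant $dz_j$ runs through all of $R$, so the columns realize exactly the maps $x\mapsto cx+b$ with $b\in R$. Reading rows the same way gives all maps with multiplier $d$, while the computation already performed in the proof of Proposition~\ref{pr1} (which is where reflectability is used) shows that the reverse columns and reverse rows have multipliers $-c$ and $-d$ respectively. Hence the $4n$ lines of $M$ are precisely the permutations $x\mapsto gx+b$ with $g\in G$ and $b\in R$.

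Finally I would check that the set $A$ of these affine maps is a subgroup of $S_n$: composing $x\mapsto g_1x+b_1$ with $x\mapsto g_2x+b_2$ produces multiplier $g_2g_1\in G$ and a constant in $R$, the identity is $x\mapsto x$, and the inverse of $x\mapsto gx+b$ is $x\mapsto g^{-1}x-g^{-1}b$ with multiplier $g^{-1}\in G$; since distinct pairs $(g,b)$ yield distinct maps, $|A|=|G|\,|R|=4n$. By Proposition~\ref{pr1} the $4n$ lines of $M$ are all distinct, so $M$ is strongly asymmetric and $A$ is packed into the single Latin square $M$; the order $4n$ is forced because a strongly asymmetric $n$-by-$n$ Latin square has exactly $4n$ distinct lines. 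There is no real obstacle here, as everything of weight is already contained in Proposition~\ref{pr1}; the only step deserving care is the bookkeeping that turns the union of the four multiplier-classes of lines into a single closed subgroup.
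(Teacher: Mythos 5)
Your proposal is correct and takes essentially the same route as the paper: the paper's own justification is precisely the paragraph preceding the corollary, which identifies the $4n$ lines of the matrix $M$ of Proposition~\ref{pr1} with the affine permutations $x \mapsto gx + b$ ($g \in G$, $b \in R$) and observes that these form a subgroup of order $4n$, with distinctness of the lines supplied by Proposition~\ref{pr1}. You merely make explicit the multiplier bookkeeping (columns, rows, reverse columns, reverse rows realizing multipliers $c$, $d$, $-c$, $-d$) and the verification of the group axioms, which the paper leaves implicit.
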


\noindent Combining Propositions  \ref{ekvi} and  \ref{pr1} we have:

\begin{theorem}
If the integer $n > 4$ is not of the form $p$ or $2p$ with prime $p$ congruent to $3$ modulo $4$, then there exists a strongly asymmetric Latin square of order $n$ the $4n$ distinct lines of which form a permutation group (subgroup of $S_n$).
\end{theorem}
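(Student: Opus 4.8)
The plan is to obtain the Latin square by combining Propositions \ref{ekvi} and \ref{pr1} and then to recognize its lines as an affine permutation group. First I would observe that, since $n > 4$ is neither a prime congruent to $3$ mod $4$ nor twice such a prime, condition \ref{ii} of Proposition \ref{ekvi} holds; that proposition therefore furnishes an $n$-element commutative ring $R$ with unit possessing a quartet $G$. By the definition of a quartet, $G = \{1, -1, r, -r\}$ for some unit $r$ distinct from $\pm 1$.

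Next I would select $c, d \in G$ meeting the hypotheses of Proposition \ref{pr1}: they must be distinct, and $c$ must not be the negative of $d$. Taking $c = 1$ and $d = r$ works, because $r \neq 1$ and $r \neq -1$. I would also fix a reflectable enumeration $z_1, \ldots, z_n$ of the elements of $R$ with respect to the underlying element $u$, which exists by the earlier Fact asserting the existence of reflectable enumerations. Feeding $R$, $c$, $d$, and this enumeration into Proposition \ref{pr1} immediately yields that the matrix $M(i,j) = c z_i + d z_j$ is a Latin square all of whose $4n$ lines are distinct; that is, $M$ is strongly asymmetric.

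It then remains to verify that these $4n$ lines form a subgroup of $S_n$, and this is the only step that calls for a little thought rather than bookkeeping. Each line of $M$ represents a permutation of $R$ of the affine form $x \mapsto g x + b$ with $g \in G$ and $b \in R$: the columns and rows have leading coefficients $c$ and $d$ respectively, while the reflection identity $z_i' = z_{n-i+1}$ used inside the proof of Proposition \ref{pr1} shows that passing to reverses multiplies the leading coefficient by $-1$, so the leading coefficients arising range over all of $G = \{\pm c, \pm d\}$. The set $A$ of all maps $x \mapsto g x + b$ with $g \in G$ and $b \in R$ is closed under composition and inversion, since $G$ is a group under multiplication and $R$ is a ring, whence $A$ is a subgroup of $S_n$ of order $|G|\cdot|R| = 4n$. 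Because the $4n$ lines are distinct, all belong to $A$, and $|A| = 4n$, the lines are exactly the elements of $A$ and so constitute a permutation group of order $4n$, completing the proof. The hard part is thus not the construction of $M$, which is handed to us by Proposition \ref{pr1}, but confirming that the reverse lines supply precisely the missing coefficients $-c$ and $-d$ so that the full quartet, and hence the whole affine group $A$, is realized.
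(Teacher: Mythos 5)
Your proposal is correct and follows essentially the same route as the paper: invoke Proposition \ref{ekvi} to obtain a ring with a quartet, feed a pair $c,d$ with $c\neq\pm d$ into Proposition \ref{pr1} to get the strongly asymmetric Latin square, and then identify the $4n$ lines with the affine group $\{x\mapsto gx+b : g\in G,\ b\in R\}$ of order $4n$, exactly as the paper does in the discussion preceding its Corollary. Your write-up merely makes explicit (the concrete choice $c=1$, $d=r$, and the sign-flip of leading coefficients under reversal) what the paper leaves as an observation.
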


When $n$ is a prime congruent to $3$ modulo $4$, then by the non-existence of subgroups of order $4n$ in $S_n$ (see Proposition \ref{pr10} in the Appendix), obviously no permutation group can be packed into a single Latin square of order $n$. Finally, if $n$ is of the form $2p$ with $p$ prime congruent to $3$ modulo $4$, then subgroups of order $4n$ do exist in $S_n$ (Proposition \ref{pr11} in the Appendix), but we do not know
if any such subgroup can be packed into a Latin square.

\section{Mutually orthogonal Latin squares}

A pair of Latin squares $L$ and $L'$ of order $n$ are  \textit{orthogonal} if the ordered pairs $(L(i,j),L'(i,j))$ are distinct for all $i, j\in \{1,\ldots,n\}$. 
A set of Latin squares is called \textit{mutually orthogonal} (MOLS) if each Latin square in the set is orthogonal to every other Latin square of the set.

\begin{theorem}\label{orto}
For every prime number $p$ congruent to $1$ modulo $4$, there is a permutation group $G$ of order $(p-1)p$ and a set of $(p-1)/4$ mutually orthogonal Latin squares of order $p$ such that every permutation in $G$ occurs exactly once as a line of  one of these Latin squares. 
\end{theorem}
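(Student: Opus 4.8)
The plan is to realize $G$ as the one-dimensional affine group $\mathrm{AGL}(1,p)=\{x\mapsto gx+b : g\in GF(p)^*,\ b\in GF(p)\}$, which has order $(p-1)p$, and to produce the $(p-1)/4$ squares in exactly the shape used in Proposition \ref{pr1}, namely $M_k(i,j)=c_k z_i+d_k z_j$ for a reflectable enumeration $z_1,\ldots,z_p$ of $GF(p)$. First I would record what the argument of Proposition \ref{pr1} actually needs: its proof uses only that $c_k,d_k$ are nonzero and that $c_k\neq\pm d_k$ (it never uses that $c_k,d_k$ lie in a quartet), so under those hypotheses $M_k$ is a strongly asymmetric Latin square. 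Moreover, reading off its four families of lines via $z_i'=u-z_i$, they are precisely the affine maps $x\mapsto gx+b$ whose multiplicative part $g$ runs through $\{c_k,-c_k,d_k,-d_k\}$ (columns give $c_k$, reverse columns $-c_k$, rows $d_k$, reverse rows $-d_k$), and for each such $g$ the additive part $b$ runs over all of $GF(p)$ because $c_k,d_k\neq 0$. Thus the line-set of $M_k$ is exactly the set of affine maps with slope in $\{c_k,-c_k,d_k,-d_k\}$.

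This reduces the theorem to choosing the pairs $(c_k,d_k)$ so that (a) the slope-sets $\{c_k,-c_k,d_k,-d_k\}$ partition $GF(p)^*$, which makes the union of the line-sets equal to $\mathrm{AGL}(1,p)$ with every permutation appearing exactly once, and (b) the squares are pairwise orthogonal. For (b) I would first establish the linear-algebra criterion: $M_k$ and $M_l$ are orthogonal iff the determinant $c_k d_l-c_l d_k$ is nonzero. Indeed, a coincidence $(M_k(i,j),M_l(i,j))=(M_k(i',j'),M_l(i',j'))$ forces the homogeneous system $c_k X+d_k Y=0$, $c_l X+d_l Y=0$ in the unknowns $X=z_i-z_{i'}$, $Y=z_j-z_{j'}$, and this has only the zero solution exactly when that determinant is invertible; equivalently, orthogonality holds iff the ratios $c_k/d_k$ are pairwise distinct.

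The one genuine difficulty is that (a) and (b) pull against each other. The most obvious choices — taking the $c_k$ to be coset representatives of the quartet $\{1,-1,r,-r\}$ with $d_k=r\,c_k$, or scaling the single square $z_i+r z_j$ through the cosets of that quartet — do partition the slopes, but they force every ratio $c_k/d_k$ into $\{r,-r\}$, so all the squares coincide up to a scalar and orthogonality fails completely. I would resolve this by fixing a generator $w$ of the cyclic group $GF(p)^*$ (of order $4m$ with $m=(p-1)/4$) and setting $c_k=w^{k}$, $d_k=w^{-k-1}$ for $k=0,1,\ldots,m-1$. Writing each slope by its exponent modulo $4m$ and realizing negation as the shift by $2m$, the families $\{c_k\}$, $\{-d_k\}$, $\{-c_k\}$, $\{d_k\}$ occupy the four consecutive length-$m$ exponent blocks $[0,m)$, $[m,2m)$, $[2m,3m)$, $[3m,4m)$; a short bookkeeping then shows the slope-sets partition $GF(p)^*$, giving (a) and also $c_k\neq\pm d_k$. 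Meanwhile $c_k/d_k=w^{2k+1}$, and the exponents $2k+1$ are distinct modulo $4m$ for $k=0,\ldots,m-1$, so $c_k d_l-c_l d_k=w^{-1}(w^{k-l}-w^{l-k})\neq 0$ for $k\neq l$, giving (b).

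Assembling the pieces, each $M_k$ is a strongly asymmetric Latin square, the $m$ of them are mutually orthogonal, and their line-sets are disjoint and exhaust $\mathrm{AGL}(1,p)$; hence every permutation of the affine group of order $(p-1)p$ occurs exactly once as a line of one of these $(p-1)/4$ squares. I expect the main obstacle to be exactly the tension just described: the obvious group-theoretic choice of representatives is forced into non-orthogonality, so the heart of the proof is the explicit exponent assignment $(c_k,d_k)=(w^{k},w^{-k-1})$ that simultaneously meets the partition condition and the distinct-ratio condition, together with the (routine) check that the proof of Proposition \ref{pr1} survives the weakening from "$c,d$ in a quartet" to "$c,d$ nonzero with $c\neq\pm d$."
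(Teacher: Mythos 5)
Your proposal is correct and takes essentially the same route as the paper: both are Bose-style constructions using linear squares $M(i,j)=c\,z_i+d\,z_j$ over $GF(p)$, where the slope sets $\{c,-c,d,-d\}$ of the individual squares partition $GF(p)^*$ (so the lines exhaust the affine group $\mathrm{AGL}(1,p)$ exactly once) and pairwise orthogonality is the nonvanishing of the determinants $c_kd_l-c_ld_k$. The only difference is that where the paper simply asserts ``it is easy to see that the couples can be formed'' so as to meet the determinant condition, you supply an explicit and correct choice $(c_k,d_k)=(w^{k},w^{-k-1})$ for a generator $w$ of $GF(p)^*$ --- a useful concretization of the same argument rather than a different method.
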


\begin{proof} (Method based on Bose \cite{bose})
Let $p$ be a prime number congruent to $1$ mod $4$, with the arithmetic of $GF(p)$ on the set $\{1,\ldots,p\}$. Using the subset $V= \{1, \ldots,(p-1)/2\}$ let us form $(p-1)/4$ ordered couples $(r,s)$ so that every member of $V$ occurs exactly once as a (first or second) component of such a couples $(r,s)$. It is easy to see that the couples can be formed in a way that for any two distinct couples $(r,s)$ and $(r',s')$ the determinant of the matrix 
$$\begin{pmatrix}
r & s \\
r' & s' 
\end{pmatrix}$$
is non-zero. For each of these couples $(r,s)$ define the $p$-by-$p$ matrix $M_{rs}$ by $$M_{rs}(i,j)= ri + sj$$ The $(p-1)/4$ matrices so defined are mutually orthogonal. As the set $V$ does not contain the negative of any of its members, every permutation of the elements of $GF(p)$ given by the linear permutation polynomial $f(x)=rx+c$, with $r \in V$ and $c \in GF(p)$, will occur exactly once as a row or column of one of the $(p-1)/4$ matrices given, and every permutation of the elements of $GF(p)$ given by $f(x)=rx+c$, with $r \in GF(p)-\{0\}$ and $c \in GF(p)$ will occur exactly once as a line of one of these matrices. Moreover, the $(p-1)p$ permutations having this linear (affine) form constitute a group.
\end{proof}

\section{Minimum number of lines and symmetries of Latin squares}

The question that is opposite to the one asking for as many distinct lines as possible in Latin squares is that of how many of the $4n$ lines of a Latin square of  order $n$ must be distinct. More precisely, let  $min(n)$ be the first positive integer $m$  such that every Latin square of order $n$ has at least  $m$ distinct lines, and let $min(L)$ denote the number of distinct lines in a given Latin square $L$.  By the very definition of Latin squares,  $min(n)$ is obviously at least $n$.  In this section we shall see that the value of $min(n)$ depends on the parity of $n$, and for a given Latin square $L$ of order $n$, it depends on the symmetry properties of $L$ whether $min(L)\geq  min(n)$.

\begin{theorem}
 If $n>1$ is odd, then every Latin square of order $n$ has at least $2n$ distinct lines, and a Latin square of order $n$ having $2n$ distinct lines exists. For even $n$, every Latin square of order $n$ has at least $n$ distinct lines, and a Latin square of order $n$ having $n$ distinct lines exists.
\end{theorem}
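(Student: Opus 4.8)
The statement packages four claims: two lower bounds and two matching constructions. The even lower bound is immediate, since the $n$ rows of any Latin square are already pairwise distinct permutations, so there are at least $n$ distinct lines. The substance of the theorem is therefore the odd lower bound together with the two existence statements, and I would organize the proof accordingly. Throughout I would view each line as a sequence of length $n$, and write $m=(n+1)/2$ for the central position when $n$ is odd, this being the unique position left fixed by reversal.

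I would prove the odd lower bound first, and I expect this to be the one place where an idea is needed rather than a routine check. The plan is to show that for odd $n$ no reverse row of a Latin square can coincide with any row. Suppose the reverse of row $i$ equals row $k$. Comparing entries in the central position $m$, which reversal fixes, forces $L(i,m)=L(k,m)$; since column $m$ of a Latin square has distinct entries, this gives $i=k$. But then row $i$ equals its own reversal, i.e. it is a palindrome, which is impossible for a permutation of $n\ge 2$ distinct symbols, as its first and last entries would have to agree. Hence the $n$ pairwise distinct rows and the $n$ pairwise distinct reverse rows are altogether $2n$ distinct sequences, so there are at least $2n$ distinct lines.

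It then remains to exhibit, for each parity, a Latin square meeting the bound. For odd $n$ I would take the addition table $L(i,j)=i+j$ of $\mathbb{Z}_n$: being symmetric, its columns repeat its rows and its reverse columns repeat its reverse rows, so the only lines present are the $n$ rows and the $n$ reverse rows, giving exactly $2n$ by the previous paragraph. For even $n$ the target is the opposite extreme, a Latin square that is simultaneously symmetric and centrosymmetric; such a square satisfies $L=L^T=JLJ=JL^TJ$, whence all four families of lines collapse onto its $n$ rows. To build one I would use the order-two element $t=n/2$ of $\mathbb{Z}_n$: the map $x\mapsto x+t$ is a fixed-point-free involution, so I partition the elements into pairs $\{x,x+t\}$ and enumerate them as $z_1,\ldots,z_n$ with $z_{n+1-i}=z_i+t$. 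Then $M(i,j)=z_i+z_j$ is a symmetric Latin square which is centrosymmetric, because $z_{n+1-i}+z_{n+1-j}=z_i+z_j+2t=z_i+z_j$; moreover the reverse of row $i$ is row $n+1-i$, so all $4n$ lines reduce to the $n$ rows and the square has exactly $n$ distinct lines.

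The two delicate points are precisely the parity-sensitive facts used above: the central-position argument requires $n$ odd, so that a single fixed position exists and produces a forbidden column repeat, while the collapse in the even construction requires the order-two element $t$, i.e. $n$ even, so that $2t=0$ makes the table centrosymmetric. These are the two places where the parity of $n$ enters the argument, matching the bifurcation asserted in the statement.
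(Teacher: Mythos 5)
Your proposal is correct in all four parts, and three of them (the even lower bound, the odd lower bound, and the odd construction) follow essentially the paper's route: the paper also rests the odd bound on the central position being fixed by reversal, though it phrases it as a pigeonhole contradiction (fewer than $2n$ distinct lines forces some line to occur at least three times, hence both as a row and a reverse row, or both as a column and a reverse column, clashing at the middle entry), whereas you argue directly that the $n$ rows and $n$ reverse rows are pairwise disjoint families; and the paper likewise uses the cyclic addition table $M(i,j)=i+j-1 \bmod n$ for the odd existence. Where you genuinely diverge is the even construction. The paper takes a symmetric Latin square $A$ of order $m=n/2$, sets $B(i,j)=A(i,j)+m$, and forms the block matrix
\[\left[\begin{array}{c|c}
A&BJ\\ \hline
JB&JAJ\end{array}\right],\]
which is symmetric and centrosymmetric and hence has only $n$ distinct lines (a construction it attributes to the centrosymmetric-matrix literature). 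You instead build the square in one stroke as the addition table $M(i,j)=z_i+z_j$ of $\mathbb{Z}_n$, with the enumeration chosen so that $z_{n+1-i}=z_i+n/2$; the identity $2\cdot(n/2)=0$ then gives centrosymmetry, and the reverse of row $i$ is literally row $n+1-i$. Your version buys uniformity and brevity --- a single formula, no auxiliary half-order square, and a verification that is one line of arithmetic --- and it pleasantly mirrors the ``reflectable enumeration'' device the paper itself introduces in its Section 4, with the reflection $x\mapsto u-x$ replaced by the translation $x\mapsto x+n/2$. The paper's block approach, in exchange, makes visible how any symmetric Latin square of order $n/2$ can be bootstrapped to an extremal example of order $n$, tying the result to known constructions of centrosymmetric doubly stochastic matrices.
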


\begin{proof} If a Latin square of odd order $n>1$ had less than $2n$ distinct lines, then some line would have to appear at least three times in $L$. It would then have to appear either both as a row and a reverse row, or both as a column and a reverse column. In  the first case the middle elements of two rows would have to be the same, which is impossible in a Latin square, and the second case is ruled out similarly. On the other hand, given any $n>1$ odd number, the matrix $M(i,j)=i+j-1$ (with addition $\mod n$) is a Latin square with exactly $2n$ lines (see Proposition \ref{hus}).

For the even case, we only need to show that for any even $n = 2m$ a Latin square of order $n$ having only $n$ distinct lines exists. Indeed, Let $A$ be a symmetric Latin square of order $m$.  Let another $m$-by-$m$ matrix $B$ be defined by $B(i,j)=A(i,j)+m$.  Then, with $J$ denoting the  $0-1$ matrix with $1$'s only on the Hankel diagonal (reversal matrix), the following matrix is a Latin square of order $n$ having only $n$ distinct lines:

\[\left[\begin{array}{c|c}
A&BJ\\ \hline
JB&JAJ\end{array}\right]\]

\noindent Note that this $n \times n$  matrix is symmetric and also centrosymmetric. The construction is similar to a construction for centrosymmetric Latin squares 
 given in \cite{cela}.
\end{proof}
\begin{theorem}
A Latin square $L$ of odd order $n>1$ has exactly $2n$ distinct lines if and only if $L$ is symmetric or Hankel symmetric. A Latin square $L$ of even order $n>0$ has exactly $n$ distinct lines if and only if $L$ is both symmetric and Hankel symmetric.
\end{theorem}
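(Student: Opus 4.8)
The plan is to treat the two parities separately and, in each case, to translate the statement about the number of distinct lines into a statement about the four sets $\mathcal{R},\mathcal{C},\mathcal{R}',\mathcal{C}'$ of rows, columns, reverse rows and reverse columns (each of cardinality $n$, since the rows of a Latin square are distinct and reversal is injective). The engine throughout is a pivot observation. If a row equals a column, say row $a$ equals column $b$, then comparing the two sequences at position $a$ gives $L(a,b)=L(a,a)$, forcing $b=a$ and $L(a,k)=L(k,a)$ for all $k$; I call such a row of \emph{type S}. If row $a$ equals a reverse column, comparing at position $n+1-a$ forces that column to be column $n+1-a$ and yields $L(a,k)=L(n+1-k,n+1-a)$ for all $k$; I call such a row of \emph{type H}. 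All rows being type S means $L=L^{T}$ (symmetric), and all rows being type H means $L=JL^{T}J$ (Hankel symmetric). For odd $n$ I also use the fact already exploited in the previous theorem: the middle entry of a row is fixed by reversal, so no row can equal any reverse row, whence $\mathcal{R}\cap\mathcal{R}'=\emptyset$ and $\mathcal{C}\cap\mathcal{C}'=\emptyset$.

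For the even case I would argue as follows. The union of the four line-sets already contains $\mathcal{R}$, which has $n$ elements, so $N=n$ forces the union to equal $\mathcal{R}$, and therefore $\mathcal{R}=\mathcal{R}'=\mathcal{C}=\mathcal{C}'$. From $\mathcal{C}=\mathcal{R}$ every column equals some row; by the pivot it equals the row of the same index, so $L$ is symmetric. Then from $\mathcal{R}=\mathcal{R}'$ each reverse row is a row, say reverse row $i$ equals row $\rho(i)$, i.e. $L(\rho(i),k)=L(i,n+1-k)$ for all $k$; evaluating at $k=i$ and using the symmetry just established gives $L(i,\rho(i))=L(i,n+1-i)$, whence $\rho(i)=n+1-i$, which is exactly centrosymmetry $L(i,k)=L(n+1-i,n+1-k)$. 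Symmetric plus centrosymmetric forces Hankel symmetry as well, so $L$ has all three symmetries. The converse is immediate: all three symmetries give $\mathcal{R}=\mathcal{C}=\mathcal{R}'=\mathcal{C}'$ and hence $N=n$.

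For the odd case, $\mathcal{R}\cap\mathcal{R}'=\emptyset$ gives $|\mathcal{R}\cup\mathcal{R}'|=2n$ and likewise $|\mathcal{C}\cup\mathcal{C}'|=2n$, so having exactly $2n$ distinct lines is equivalent to $\mathcal{R}\cup\mathcal{R}'=\mathcal{C}\cup\mathcal{C}'$. The converse direction is easy, since symmetry gives $\mathcal{R}=\mathcal{C}$ and Hankel symmetry gives $\mathcal{R}=\mathcal{C}'$, either of which yields this set equality. For the forward direction, $\mathcal{R}\subseteq\mathcal{C}\cup\mathcal{C}'$ together with $\mathcal{C}\cap\mathcal{C}'=\emptyset$ and the pivot observation shows that every row is of type S or of type H, and of exactly one of the two (a row cannot be both a column and a reverse column). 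Thus the row indices split as $T_S\sqcup T_H$, and a short computation, reversing the type-H identity, shows that both $T_S$ and $T_H$ are invariant under the involution $i\mapsto n+1-i$. It then remains only to prove that one of $T_S,T_H$ is empty, for then $L$ is symmetric or Hankel symmetric accordingly.

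This last step, the absence of mixing of types, is the main obstacle, and it is genuinely where the Latin and oddness hypotheses do the work: the four-cycles of equal entries forced by a single type-S row together with a single type-H row are, by themselves, mutually Latin-consistent, so no contradiction arises from one symbol in isolation. The plan is to argue by a global symbol-collision. A type-S row $a$ makes its entire row and column transpose-symmetric, so that any symbol occurring in column $a$ is reflected into a symmetric position; I would show that the presence of a type-H row then forces some symbol to occur twice within a single column, violating the Latin property, the rigidity coming precisely from the fixed middle index available only in odd order. Concretely, if all rows but one are type S, the symmetry of the others propagates through the Latin condition to make the remaining row type S as well; the general case reduces to this situation using the $\iota$-invariance of $T_S,T_H$ together with the S$\leftrightarrow$H duality obtained by conjugating with the reversal matrix $J$, which interchanges symmetry and Hankel symmetry. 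Once no mixing is established, $T_S=\emptyset$ or $T_H=\emptyset$, giving Hankel symmetry or symmetry respectively, and the proof is complete.
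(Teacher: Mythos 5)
Your even-order argument is correct, and it is essentially the paper's argument (the paper phrases it as "each line appears four times" and applies the same index-pivot; your detour through centrosymmetry is harmless). The setup of the odd case is also sound: the disjointness $\mathcal{R}\cap\mathcal{R}'=\emptyset$, $\mathcal{C}\cap\mathcal{C}'=\emptyset$, the reformulation of the hypothesis as $\mathcal{R}\cup\mathcal{R}'=\mathcal{C}\cup\mathcal{C}'$, the classification of every row as type S or type H (and not both), and the easy converse are all fine. But the hard direction has a genuine gap exactly where you admit "the main obstacle" lies: you never prove that $T_S$ and $T_H$ cannot both be nonempty. The two ingredients of your plan do not combine into a proof. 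The observation that "all rows but one of type S" forces the last row to be type S is true (trivially: row $i=$ column $i$ for all $i\neq i_0$ already gives $L(i,i_0)=L(i_0,i)$ for every $i\neq i_0$, hence row $i_0=$ column $i_0$), but it only disposes of $|T_H|\le 1$. Neither the invariance of $T_S,T_H$ under $i\mapsto n+1-i$ nor the S$\leftrightarrow$H duality reduces, say, $|T_S|=3$, $|T_H|=n-3$ to that configuration: these operations permute within or swap the two classes, they never shrink one of them. (Also, the duality is not conjugation by $J$ — conjugation $L\mapsto JLJ$ preserves both symmetry and Hankel symmetry; it is one-sided multiplication $L\mapsto LJ$ that interchanges them.)

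The missing idea, which is how the paper closes the argument, is to anchor on one row of known type and pivot through the \emph{middle} row and column, written $m=(n+1)/2$ — this is where oddness is used beyond the disjointness of rows and reverse rows. Suppose row $1$ is type S, i.e.\ equals column $1$. First one shows the middle row equals the middle column: if the middle row were column $j$, comparing first entries and using $L(1,j)=L(j,1)$ forces $j=m$; if it were reverse column $j$, comparing middle entries forces $j=m$, and then comparing first entries gives $L(1,m)=L(n,m)$, impossible in a Latin square. Now suppose some row $i$ were type H, so row $i=$ reverse column $j$ with $j=n+1-i$ and $i\neq j$ (the case $i=j=m$ is excluded by the computation just made). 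Its middle entry $x=L(i,m)$ is the $i$-th entry of the middle column; but $x$ is also the middle entry of reverse column $j$, i.e.\ $x=L(m,j)$, the $j$-th entry of the middle row, hence of the middle column. So $x$ occupies positions $i\neq j$ of the middle column — precisely the symbol collision you were hoping to engineer, obtained in one step. Hence every row is type S and $L$ is symmetric; if instead row $1$ is type H, the dual argument gives Hankel symmetry. Without this (or some equivalent) pivot through the middle line, your proposal proves only the easy parts of the odd case.
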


\begin{proof}
\textit{For odd} $n=2m-1$: if $L$ is symmetric, then each of the $n$ rows  also appears as a column. Therefore this same line cannot additionally also appear as a reverse row or a reverse column, because this would result in a coincidence of middle elements, which is impossible in a Latin square. It follows that reverse rows, also appearing by symmetry as reverse columns, make up an other set of $n$ disjoint lines. Similarly, if $L$ is Hankel symmetric, then each of the $n$ rows  appears as a reverse column and each of the $n$ reverse rows appears as a column resulting in a total of $2n$ distinct lines.

To verify the converse, suppose that  $L$ has exactly $2n$ distinct lines. No column can appear as a reverse column. Thus the set of columns and reverse columns must contain $2n$ distinct lines. Each row must therefore appear in this set, i.e. each row must also appear either as a column or as a reverse column. 

In particular the first row of $L$ would have to appear either as a column or as a reverse column, and indeed either as the first column or as the last reverse column of the square. 

\smallskip

\textit{Case 1}: the first row of $L$ appears as the first column.  It is easy to see that this assumption implies that the middle row has to coincide with the middle column. 

We now claim that $L$ is symmetric.

Suppose the contrary, that $L$ is not symmetric, and let the $i$-th row be distinct from the $i$-th column. This will lead to a contradiction.

 Observe that the $i$-th row must coincide either with a column or with a reverse column. 

\emph{Subcase 1}: if the $i$-th row coincides with the $j$-th column, the first elements of these two lines being the same number $x$, $i$ would have to be equal to $j$ because $x$ appears in the same place of the first row and also of the first column, and thus the $i$-th row would be after all the same as the $i$-th column.

\emph{Subcase 2}: if the $i$-th row coincides with the $j$-th reverse column, then $i$ cannot be $j$ because in that case the first and last elements of the $i$-th row would be the same, which is impossible. The relationship between $i$ and $j$ must be $i+j = n+1=2m$ (see Figure \ref{abra}). Thus $i,j$ and $m$ are three distinct numbers and $m$ is between $i$ and $j$.
Let $x$ denote the middle element of the $i$-th row, this is also the $i$-th element of the middle column.
Clearly $x$ is also the middle element of the $j$-th reverse column, in other words $x$ is also the $j$-th element of the middle row.
But then $x$ is also the $j$-th element of the middle column (because the middle row and the middle column are the same). It is thus both the $i$-th and the $j$-th element of the middle column: a contradiction showing that $L$ is indeed symmetric as claimed.

\smallskip

\textit{Case 2}:  the first row of $L$ appears as the last reverse column. A similar reasoning verifies that this assumption  leads to the conclusion that $L$ is Hankel symmetric.

\begin{figure}[ht!]
   \includegraphics[width=0.69\linewidth]{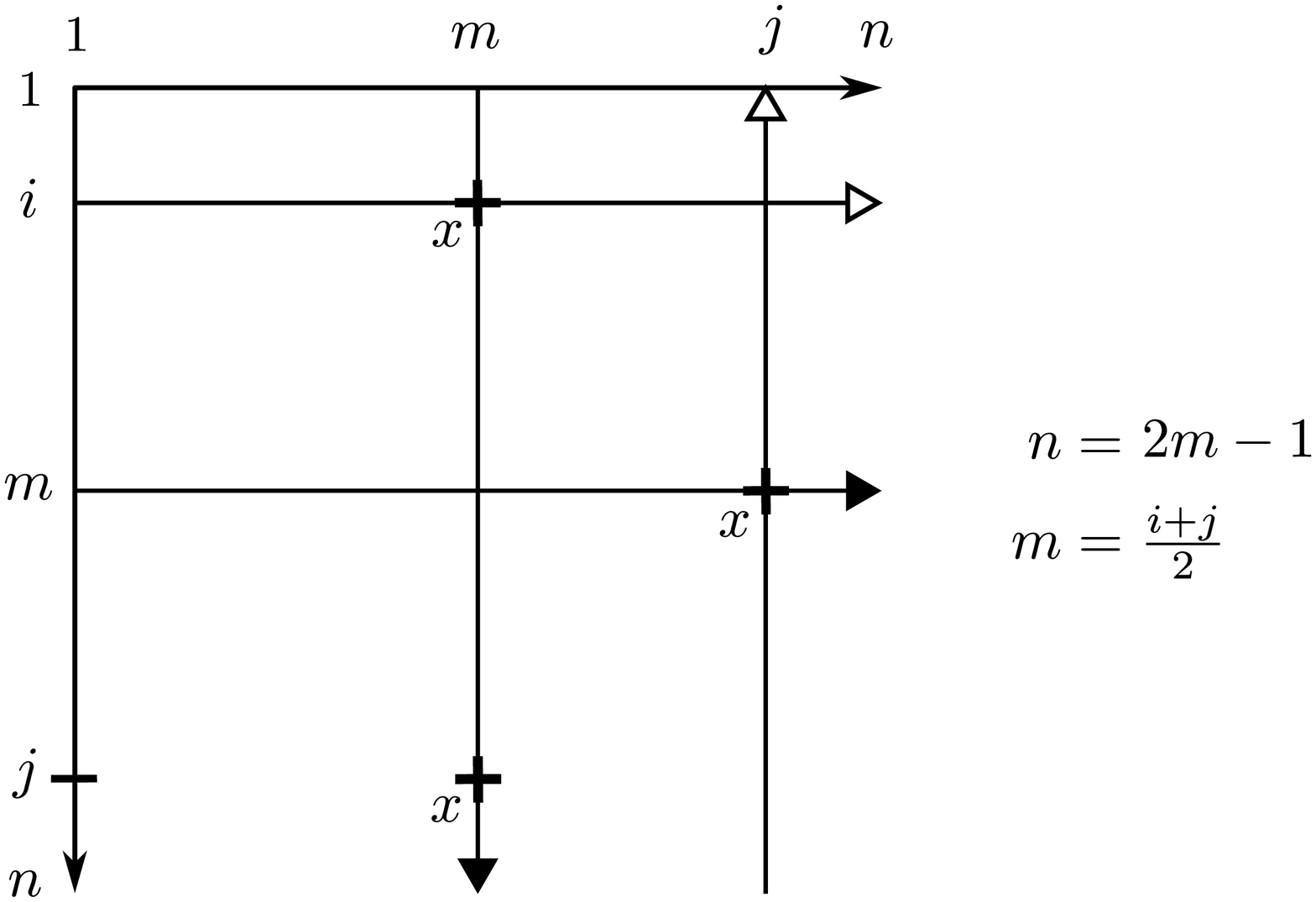}
   \caption{}
   \label{abra}
\end{figure}
\smallskip
\textit{For even} $n$:  if $L$ is both symmetric and Hankel symmetric, then it is also centrosymmetric, and every row must appear also as a column, as a reverse row and as a reverse column as well. There can be thus no more lines appearing in $L$ than the $n$ rows.

To see the converse, suppose that $L$ has only $n$ distinct lines. Each line must therefore appear $4$ times: it must appear as a row, as a column, as a reverse row and as a reverse column as well. But the $i$-th row can only appear as the $i$-th column, and thus $L$ must be symmetric. For a similar reason, the $j$-th reverse row must appear as the reverse column that is the $(n-j+1)$-th from the left, and thus $L$ is also Hankel symmetric.
\end{proof}
\section*{Appendix: Existence of permutation groups of order $4n$}

\begin{proposition}\label{pr10}
There is no order $4n$ subgroup in $S_n$ for prime $n$ congruent to $3$ modulo $4$.
\end{proposition}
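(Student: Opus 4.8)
The plan is to combine Sylow theory with the normalizer--centralizer theorem. Suppose for contradiction that $G \leq S_n$ has order $4n$ with $n$ prime and $n \equiv 3 \pmod 4$. The case $n=3$ is immediate, since $4n = 12 > 6 = |S_3|$, so I would assume $n \geq 7$. I would first produce a normal Sylow $n$-subgroup $P$ of $G$: the number $n_p$ of Sylow $n$-subgroups satisfies $n_p \mid 4$ and $n_p \equiv 1 \pmod n$, and because $n \geq 7 > 4$ the only admissible value is $n_p = 1$. Hence $P \trianglelefteq G$ and $|P| = n$. Since $P$ has prime order, its nonidentity elements have order $n$; an element of order $n$ in $S_n$ must have cycle type consisting of fixed points and $n$-cycles whose lengths have least common multiple $n$, and on $n$ symbols this forces a single $n$-cycle. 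Thus $P = \langle \sigma \rangle$ for some $n$-cycle $\sigma$.

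Next I would pin down the normalizer $N := N_{S_n}(P)$, which contains $G$ because $P$ is normal in $G$. The centralizer of a single $n$-cycle in $S_n$ is exactly the cyclic group it generates: the centralizer of a permutation with cycle type $\prod_i i^{m_i}$ has order $\prod_i i^{m_i} m_i!$, which for one $n$-cycle evaluates to $n$, and $\langle \sigma\rangle$ already has order $n$, so $C_{S_n}(\sigma) = \langle\sigma\rangle = P$. Since $P$ is abelian and equals its own centralizer, the $N/C$ theorem gives an embedding of $N/P$ into $\operatorname{Aut}(P) \cong (\mathbb{Z}/n\mathbb{Z})^\times$, a group of order $n-1$. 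Therefore $[N:P]$ divides $n-1$ and $|N|$ divides $n(n-1)$.

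Finally, Lagrange's theorem applied to $G \leq N$ yields $4n = |G| \mid |N| \mid n(n-1)$, so $4n \mid n(n-1)$, which forces $4 \mid n-1$. But $n \equiv 3 \pmod 4$ gives $n-1 \equiv 2 \pmod 4$, a contradiction; hence no subgroup of order $4n$ exists. I expect the main obstacle to be the computation that the centralizer of an $n$-cycle is precisely $\langle\sigma\rangle$ (equivalently, the determination that $|N_{S_n}(P)| = n(n-1)$), since this is what converts the congruence $n \equiv 3 \pmod 4$ into the numerical divisibility that fails; the Sylow step and the final arithmetic are routine once this normalizer bound is in hand.
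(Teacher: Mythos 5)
Your proof is correct, and it takes a genuinely different route from the paper's. The paper, after the same Sylow step producing the cyclic subgroup $C$ of order $n$, argues via Burnside's orbit-counting lemma: transitivity forces the point stabilizers to have order $4$, and the proof then splits into cases according to whether a stabilizer is cyclic or a Klein four-group, the latter case requiring a rather involved analysis of matchings in a complete graph to lower-bound the fixed points of the involutions; in both cases the fixed-point sum exceeds $4n$, contradicting Burnside. Your argument replaces all of this with structural group theory: the Sylow subgroup $P=\langle\sigma\rangle$ is normal in $G$, the centralizer of an $n$-cycle in $S_n$ is exactly $\langle\sigma\rangle$, and the $N/C$ theorem embeds $N_{S_n}(P)/P$ into $\operatorname{Aut}(P)\cong(\mathbb{Z}/n\mathbb{Z})^\times$, so $|G|$ divides $n(n-1)$ and $4\mid n-1$ fails. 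Your route is shorter, avoids all case analysis, and proves something strictly stronger: for prime $n$, any subgroup of $S_n$ of order $kn$ with $k<n$ must satisfy $k\mid n-1$ (indeed such a subgroup embeds in the affine group of order $n(n-1)$), which explains precisely why the construction in the paper succeeds for primes $n\equiv 1\pmod 4$ and is impossible for $n\equiv 3\pmod 4$. What the paper's proof buys in exchange is self-containedness at the level of counting arguments --- it needs nothing beyond Sylow's existence statement and Burnside's lemma, whereas yours invokes the centralizer formula for cycle types and the normalizer--centralizer theorem; but these are standard, and your proof is the cleaner one.
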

\begin{proof}
Suppose the contrary, let $G$ be such a subgroup of $S_n$ of order $4n$.
By Sylow, $G$ has a subgroup $C$ of order $n$. Choose any generator $g$ of $C$, it must have a unique cycle of length $n$. 
Also by Sylow, some permutation $r$ of order $2$ (involution) must also belong to $G$ 
and   without loss of generality   to the stabilizer of $(n+1)/2$ in $G$.
The stabilizer subgroups of $G$ are all conjugates of each other by elements of $C$, and they all have the same order.
Also, under the conjugation establishing an isomorphism between two stabilizers, corresponding elements of the stabilizers have similar cycle structure, in particular they have the same number of fixed points.

As $G$ is transitive with orbit of size $n$, the stabilizers must have order $4$.
We use the fact that a $4$-element group is either cyclic or it is isomorphic to the direct product $\mathbb{Z}_2\times \mathbb{Z}_2$ (Klein group).

\emph{Case 1}: the stabilizer $F$  of $(n+1)/2$  is cyclic.
Then it is generated by a permutation $f$ in $F$ the square of which is $r$.
Clearly $f$ must have at least one cycle of length $4$, but since $n$ is congruent to $3$ mod $4$, there must be 
at least $2$ elements of $\{1,\ldots,n\}$ different from $(n+1)/2$ that are not on any cycle of length $4$. 
Such an element $x$ is either a fixed point of $f$ or it belongs to a cycle of length $2$.
In either case the square of $f$ must fix every such element $x$. Thus $r$ has at least $3$ fixed points. 

Applying the Burnside Lemma to the group $G$ acting on $\{1,\ldots,n\}$ (permutations being functions from $\{1,\ldots,n\}$ to itself), the sum of the number of fixed points of the various permutations $h \in G$ taken over all members $h$ of $G$ should be $4n$.
The identity permutation has $n$ fixed points.
Each of the $n$ conjugates of $r$ by members of $C$ has at least $3$ fixed points, these are at least $3n$ points altogether. 
Each of the $n$ conjugates of $f$ by members of $C$ has at least the fixed point  $(n+1)/2$, the sum of these is at least $n$.  

The $n$ conjugates are distinct since they are not all the same, and they form an orbit under conjugation by the elements of the prime subgroup $C$.
Thus the Burnside count of fixed points is at least $n+3n+n=5n$: we have obtained a contradiction.

\emph{Case 2}: The stabilizer $F$ of $(n+1)/2$ is a Klein group.
Then it consists of the identity permutation, the permutation $r$, and two other permutations $s$ and $q$ of order $2$, such that the product of any non-coinciding two of  $r,s,q$ is the third.
The cycle structure of each of $s$ and $q$ consists of at least one cycle of length $2$ and an   odd  number of fixed points  including the point $(n+1)/2$.

Consider the complete graph $K$ on the $(n-1)$-element vertex set $$V=  \{1,\ldots,n\} - \{(n+1)/2\}$$
The permutations $r,s,q$ define respective matchings $R,S,Q$ in $K$: a pair of vertices is an edge in the matching $R,S,Q$ if it is transposed by $r,s,q,$ respectively.
Without loss of generality assume that the size of the matching $R$ is not less than the size of $S$ or $Q$.
The connected components of the union of any two matchings can be only circuits of even length and paths. 

We claim that any such circuit must have length $4$.
For suppose that one of these circuits, say in the union of matchings $M$ and $N$, corresponding to two of the three permutations $r,s,q,$ say to the permutations $f,g,$ is a circuit of length $2m$, $m>2$. This would imply that the vertices of this circuit would not be fixed by the composite permutation $fgfg$, thus $fg$ would not be of order $2$, as any non-identity element of a Klein group should be: a contradiction proving the claim that all circuits in the union of any two of $R,S,Q$ have length $4$. 

Next, observe that if $(u,v,w,x)$ is a circuit (of length $4$) in the union of $S$ and $Q$, then, due to $sq=qs=r$, the edges $uw$ and $vx$ belong to $R$.

We claim that it is not possible for each of $S$ and $Q$ to cover all vertices in $V$. For in that case, by the maximality assumption on $R$, the matching $R$ would also cover all vertices in $V$.  As $n-1$ is congruent to $2$ mod $4$, there would be an odd number $m$ of edges in $R$ that are not incident with any circuit in the union of $S$ and call these edges free. 
As $r=sq$, the end vertices of each free edge must be connected by a unique path of length  $2$ in the union of $S$ and $Q$.

We show that the end vertices of a free edge $xy$ in $R$ cannot indeed be connected by a path of length $2$ in the union of $S$ and $Q$.
For suppose that they are so connected by the path $xvy$, with $xv$ is $S$ and $vy$ in $Q$.
Let $w$ be the vertex $r(v)$. Using $rq=s$ and $rs=q$, it is easy to verify that $(x,v,y,w)$ would be a circuit of length $4$ in the union of $S$ and $Q$, contradicting the assumption that $xy$ is a free edge and proving the claim.

In other words, every free edge of $R$ is either in $S$ or $Q$. (A free edge $xy$ cannot be in both $S$ and $Q$, because then we would have $sq(x)=x$, contradicting $r(x)=y$).
Without loss of generality at least one free edge $xy$ of $R$ is not in $S$. 

The set $fix(s)$ of fixed points of $s$ consists of $(n+1)/2$ and the end vertices of the free edges of $R$ that are not in $S$. Clearly $fix(s)$ has at least $3$ elements, say $k$ elements. 
Obviously $k<n$.
The group $C$ acts on $G$ by conjugation, under this action the orbit of $s$ is not trivial, and as its size must be a divisor of the order $n$ of $C$, which is prime, this orbit must contain $n$ distinct permutations, all having $k$ fixed points, where $1<k<n$.

Applying the Burnside Lemma to the group $G$ acting on $\{1,\ldots,n\}$ (permutations being functions from $\{1,\ldots,n\}$ to itself), the sum of the number of fixed points of the various permutations $f \in G$ taken over all members $f$ of $G$ should be $4n$.
The indentity permutation has $n$ fixed points.
Each of the $n$ conjugates of $r$ by members of $C$ has $1$ fixed points, these are $n$ points altogether.
Each of the $n$ conjugates of $s$ by members of $C$ has $k$ fixed points, the sum of these is $kn$.
As $n+n+kn$ is at least $5n$, we have obtained a contradiction completing the proof.
\end{proof}

\begin{proposition}\label{pr11}
For even integers $n=2m\geq 6$  the full symmetric group $S_n$ always has a subgroup of order $4n$.
\end{proposition}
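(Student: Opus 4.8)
The plan is to exhibit, for each even $n = 2m \geq 6$, an explicit subgroup of $S_{2m}$ of order $8m = 4n$, using an \emph{intransitive} direct-product construction rather than a transitive (affine or dihedral) one. First I would record why the obvious transitive attempts break down: identifying the $2m$ points with $\mathbb{Z}_{2m}$, the affine maps $x \mapsto ax + b$ that are permutations form a group of order $2m\cdot|A|$, where $A \le (\mathbb{Z}_{2m})^{\times}$ is the group of admissible multipliers, so to reach order $8m$ one needs an order-$4$ subgroup of $(\mathbb{Z}_{2m})^{\times}$. This fails exactly in the residual case $n = 2p$ with $p \equiv 3 \pmod 4$ (the even case not already covered by the quartet rings of Proposition \ref{ekvi}), since then $(\mathbb{Z}_{2p})^{\times} \cong \mathbb{Z}_{p-1}$ with $p - 1 \equiv 2 \pmod 4$ has no element of order $4$. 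Thus transitivity is the genuine obstacle, and I would abandon it.

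The key construction is to split $\{1,\dots,2m\}$ and act separately on disjoint blocks. Let $D$ be the dihedral symmetry group of the regular $m$-gon acting on $X = \{1,\dots,m\}$ (order $2m$) and fixing all other points. On four of the remaining points, say $\{m+1,m+2,m+3,m+4\}$, place the Klein four-group $K = \langle (m+1\;m+2),\,(m+3\;m+4)\rangle \cong \mathbb{Z}_2\times\mathbb{Z}_2$, fixing everything else. Since $D$ and $K$ have disjoint supports they commute, so $\langle D,K\rangle = D\times K$ is an internal direct product of order $2m\cdot 4 = 8m = 4n$. This requires $m \ge 4$, so that four spare points are available, i.e. $n \ge 8$. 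The only remaining case is $n = 6$ ($m = 3$), where there are too few spare points for this split; there I would instead take the copy of $S_4$ acting on $\{1,2,3,4\}$ and fixing $\{5,6\}$, which has order $24 = 4\cdot 6$.

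The verification steps are routine: that the dihedral group of the regular $m$-gon embeds in $S_m$ with order $2m$ for every $m\ge 3$; that disjoint supports force $[D,K]=1$, so that $\langle D,K\rangle = D\times K$ has order $2m\cdot 4$; and that $8m = 4n$. I expect no serious difficulty beyond correctly isolating the degenerate small case $n=6$ and confirming that the construction meets, rather than merely approaches, the target order. It is worth emphasizing that this argument is uniform and in particular settles the case $n = 2p$ with $p\equiv 3\pmod 4$ that the quartet method of Proposition \ref{ekvi} does not reach, while, consistently with the remark preceding this proposition, it says nothing about whether the subgroup so produced can be packed into a Latin square.
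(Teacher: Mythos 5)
Your proof is correct and follows essentially the same route as the paper: the paper's conditions (i)--(iii) define exactly your $D_m\times\mathbb{Z}_2^2$ (the dihedral group acting as $j\mapsto i+kj \bmod m$ on $\{1,\ldots,m\}$ times the Klein group on $\{m+1,\ldots,m+4\}$, with disjoint supports), and for $n=6$ the paper likewise takes the copy of $S_4$ fixing $5$ and $6$. Your preliminary discussion of why transitive (affine) constructions fail for $n=2p$, $p\equiv 3\pmod 4$, is a nice motivational addition but not part of the paper's argument.
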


\begin{proof}
For any integer $m\geq4$  let us consider the subgroup consisting of permutations $f$ defined on the set $\{1,\ldots,m,m+1, \ldots,2m \}$ for which 
\begin{enumerate}[label=(\roman*)]

\item  each of the sets $\{1,\ldots,m\}$, $\{m+1,m+2\}$ and $\{m+3,m+4\}$ is invariant under the permutation $f$,
\item every $i\in \{m+5, \ldots, 2m \}$ is a fixed point of $f$,
\item  there exists an $i$ and a $k\in \{-1,1\}$ such that for every $j\in \{1,\ldots,m\}$,  $f(j)$ is congruent to $i+kj$ modulo $m$.
\end{enumerate}
The number of permutations in this subgroup ---   isomorphic to the direct product $D_m\times \mathbb{Z}_2^2$ of a dihedral group and the Klein group  ---  is indeed $2^3m=4n$.

For the case $m=3$ let us consider the subgroup of order $4n=24$ consisting of permutations $f$ defined on the set $\{1,\ldots,6\}$ for which 
 $5$ and $6$ are fixed points under $f$.
\end{proof}


\begin{thebibliography}{9}                                                                                            
\vspace{1.2mm} 

\bibitem{cent0}\textsc{I. T. Abu-Jeib},  \textit{Centrosymmetric Matrices: Properties and an Alternative Approach}, Canadian Applied Mathematics Quarterly 10.4, \vspace{0.7mm} 429-445, (2002)

\bibitem{cent1}\textsc{A. L. Andrew},  \textit{Centrosymmetric Matrices}, SIAM Review \vspace{0.7mm} 697-699, (1998)

\bibitem{bose}\textsc{R. C. Bose},  \textit{On the Application of the Properties of Galois Fields to the Problem of Construction of Hyper-Greco-Latin Squares}, Sankhya: The Indian Journal of Statistics,  3 (4),  323-338, \vspace{0.7mm}  (1938)



\bibitem{cela}\textsc{R. A. Brualdi,  L. Cao},  \textit{Symmetric, Hankel-Symmetric, and Centrosymmetric Doubly Stochastic Matrices}, Acta Math Vietnam, 43 (4), \vspace{0.7mm} 675-700, (2018)


\bibitem{cent2}\textsc{A. B. Cruse},  \textit{Some combinatorial properties of centrosymmetric matrices}, Linear Algebra and its Applications 16(1), \vspace{0.7mm} 65-77, (1977) 


\bibitem{gil} \textsc{E. N. Gilbert}, \textit{Latin Squares Which Contain No Repeated Digrams}, SIAM Review, vol. 7, no. 2,   189-198, \vspace{0.7mm} (1965)

\bibitem{kur} \textsc{A. D. Keedwell, J. D\'enes}, \textit{Latin squares and their applications}, Elsevier, \vspace{0.7mm} (2015)


\bibitem{cent10} \textsc{Z. Y. Liu}. \textit{Some properties of centrosymmetric matrices}, Applied mathematics and computation, 141(2-3), 297-306, \vspace{0.7mm}(2003)

\end{thebibliography}
\end{document}